\newtheorem{teor}{Theorem}
\newtheorem{lemma}[teor]{Lemma}
\newtheorem{conj}[teor]{Conjecture}
\newtheorem{prop}[teor]{Proposition}
\newtheorem{cor}[teor]{Corollary}
\newtheorem{question}[teor]{Question}
\begin{document}

\title{Inequalities detecting structural properties of a finite group}
\date{}
\author{Martino Garonzi \and Massimiliano Patassini}

\begin{abstract}
We prove several results detecting cyclicity or nilpotency of a finite group $G$ in terms of inequalities involving the orders of the elements of $G$ and the orders of the elements of the cyclic group of order $|G|$. We prove that, among the groups of the same order, the number of cyclic subgroups is minimal for the cyclic group and the product of the orders of the elements is maximal for the cyclic group.
\end{abstract}

\maketitle

\section{Introduction}

In this paper all groups are assumed to be finite.

The problem of detecting structural properties of a finite group by looking at element orders has been considered by various authors. Amiri, Jafarian Amiri and Isaacs in \cite{isaacs} proved that the sum of element orders of a finite group $G$ of order $n$ is maximal in the cyclic group of order $n$. The problem of minimizing sums of the form $\sum_{x \in G} o(x)^{-m}$, where $m$ is a positive integer and $o(x)$ denotes the order of $x$, was considered in \cite{suminverses}, however there is a mistake in the proof pointed out by Isaacs in \cite{moverf}. The main point of the argument in \cite{suminverses} is a pointwise argument, and the strong evidence that it is true suggests to state it as a conjecture.
\begin{conj} \label{mainconj}
Let $G$ be a finite group of order $n$ and let $C_n$ denote the cyclic group of order $n$. There exists a bijection $f : G \to C_n$ such that $o(x)$ divides $o(f(x))$ for all $x \in G$.
\end{conj}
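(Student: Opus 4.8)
The plan is to rephrase the existence of $f$ as a perfect matching problem and apply Hall's marriage theorem. Form the bipartite graph $\Gamma$ with parts $G$ and $C_n$, joining $x\in G$ to $y\in C_n$ whenever $o(x)\mid o(y)$; a bijection as in the statement is precisely a perfect matching of $\Gamma$, and since $|G|=|C_n|=n$ it is enough to find a matching saturating $G$. As adjacency in $\Gamma$ depends only on element orders, I would first reduce Hall's condition to a statement about the divisor lattice of $n$. Write $N(\cdot)$ for the $\Gamma$-neighbourhood. If $S\subseteq G$ violates Hall's condition, $x\in S$, $x'\notin S$ and $o(x)\mid o(x')$, then $N(x')\subseteq N(x)\subseteq N(S)$, so $S\cup\{x'\}$ still violates the condition; iterating, one may assume the violating set has the form $S_D=\{x\in G:o(x)\in D\}$ for an up-set $D$ in the divisor lattice of $n$. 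For such a set one checks $N(S_D)=\{y\in C_n:o(y)\in D\}$ (using that $D$ is an up-set), so, passing to complements and using that the number of elements of order $d$ in $G$ and the number $\varphi(d)$ each sum to $n$ over all $d\mid n$, Hall's condition becomes: for every down-set $E$ of divisors of $n$,
\[
|\{x\in G:o(x)\in E\}|\ \geq\ |\{y\in C_n:o(y)\in E\}|=\sum_{d\in E}\varphi(d).
\]

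For the principal down-sets $E=\{d:d\mid m\}$ with $m\mid n$ this is exactly Frobenius's theorem: the left-hand side is $A_m:=|\{x\in G:x^m=1\}|$, a nonzero multiple of $\gcd(m,|G|)=m$, while the right-hand side is $\sum_{d\mid m}\varphi(d)=m$. Thus Hall's condition holds for all principal order ideals. When $n$ is a prime power the divisor lattice is a chain, every order ideal is principal, and the conjecture follows for $p$-groups. Moreover, if $G\cong P_1\times\cdots\times P_r$ is nilpotent with $|P_i|=p_i^{a_i}$, then $o(x)=\prod_i o(x_i)$ for $x=(x_1,\dots,x_r)$ because the $o(x_i)$ are pairwise coprime, and $C_n\cong\prod_i C_{p_i^{a_i}}$; gluing the $p$-group bijections $P_i\to C_{p_i^{a_i}}$ coordinatewise yields the desired $f$. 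So the conjecture holds for all nilpotent groups.

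The main obstacle is a non-principal order ideal $E$, with maximal elements $m_1,\dots,m_k$. Expanding both sides of the displayed inequality by inclusion--exclusion over the greatest common divisors of sub-families of the $m_i$ reduces it to a signed combination of the valid inequalities $A_g\geq g$; but the signs alternate, so Frobenius alone is not enough --- even the natural termwise strengthening $A_e-A_d\geq e-d$ for $d\mid e\mid n$ can fail. I expect essentially all the difficulty to lie here. Plausible lines of attack: (i) induction on the number of prime divisors of $n$ (or on $|G|$), controlling the counts $A_d$ in $G$ via a minimal normal subgroup, which would in particular settle the solvable case first; (ii) feeding in sharper arithmetic information on the function $d\mapsto A_d$ than Frobenius's congruence provides, so as to show that a surplus $A_d-d>0$ at one divisor propagates to all larger divisors; (iii) constructing a fractional perfect matching of $\Gamma$ explicitly and invoking integrality of the bipartite matching polytope. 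Of these I would pursue (i) first, since the Sylow decomposition already disposes of the nilpotent case cleanly and an induction compatible with the prime factorisation of $n$ seems most likely to mesh with Frobenius's theorem.
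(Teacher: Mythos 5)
There is no proof to compare against: the paper states this as an open conjecture and says explicitly that it does not prove it (the solvable case is credited to Ladisch, and the purported proof in the earlier literature is noted to be flawed). Your proposal, to its credit, is honest about ending in the same place. The reduction you give is sound as far as it goes: the matching formulation, the observation that a Hall violator may be saturated upward so that it suffices to check the condition on order-ideals of the divisor lattice, the verification for principal ideals via Frobenius (this is essentially the inequality $|T_m|\geq m$ that the paper itself extracts from Frobenius's theorem and uses throughout), and the prime-power and nilpotent cases. Your diagnosis of where the difficulty sits is also accurate: for a non-principal down-set $E$ with several maximal elements, inclusion--exclusion produces negatively signed terms $-A_g$, and Frobenius supplies only lower bounds on the $A_d$; your example-type claim that $A_e-A_d\geq e-d$ can fail is correct (e.g. $G=S_3$, $d=2$, $e=6$ gives $A_6-A_2=2<4$).

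The genuine gap is therefore that items (i)--(iii) at the end are a research programme, not an argument: nothing in the proposal bounds $|\{x\in G:o(x)\in E\}|$ from below for a general down-set $E$, and that is exactly the open content of the conjecture. If you want a result you can actually prove along these lines, your nilpotent-case gluing is a clean special case of Ladisch's theorem; for the purposes of this paper, note that the authors sidestep the conjecture entirely and instead prove the weaker averaged inequalities of Theorem \ref{mainth} by a M\"obius-inversion argument (Lemma \ref{nov}) applied to $B(m)=|T_m|$, which needs only the Frobenius lower bounds $|T_m|\geq m$ that you have already established --- precisely the information that survives your reduction.
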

This is proved in \cite{oib} by Frieder Ladisch in the case in which $G$ is solvable. Note that the existence of a bijection as in the conjecture is equivalent to the existence of a family $\{S_d\ :\ d|n\}$ of subsets of $G$ with the following properties (here $\varphi$ denotes Euler's totient function):
\begin{itemize}
\item The sets $S_d$ are pairwise disjoint and $G = \bigcup_{d|n} S_d$.
\item $x^d = 1$ for all $x \in S_d$, for all $d|n$.
\item $|S_d| = \varphi(d)$ for all $d|n$.
\end{itemize}

Indeed, given a bijection $f$ as in the conjecture, define $S_d$ to be the preimage via $f$ of the set of elements of $C_n$ of order $d$, and given a partition as above, define $f$ piecewise sending $S_d$ to the set of elements of $C_n$ of order $d$.

\ 

The existence of such a partition is claimed in \cite{suminverses} with a wrong proof, although this is not the main result of that paper. The main result of \cite{suminverses}, dealing with the sum $\sum_{x \in G} 1/o(x)^m$, is a consequence of our main result (Theorem \ref{mainth}(1) for $s < 0 = r$). Although in this paper we do not prove Conjecture \ref{mainconj}, such conjecture is worth mentioning because it is very much related to our results.

\ 

Let $\varphi$ denote Euler's totient function, i.e. $\varphi(n)$ denotes the number of integers in $\{1,\ldots,n\}$ coprime to $n$. In this paper we consider the sum $$\sum_{x \in G} \frac{o(x)^s}{\varphi(o(x))^r}$$ for $r,s$ real numbers and compare it with the case of the cyclic group of size $|G|$. In the case $s=1$, $r=0$ this sum equals the sum of element orders, in the case $s=r=1$ it equals the sum of the cyclic subgroup sizes. Moreover if $s < 0 = r$ we get an extension of the case considered in \cite{suminverses} and the case $s=0$, $r=1$ gives the number of cyclic subgroups. This last case was what motivated us in the beginning, and as a particular case of our main theorem we obtain the following. Let $d(n)$ denote the number of positive divisors of the integer $n$.

\begin{teor} \label{motiv}
Let $G$ be a finite group. Then $G$ has at least $d(|G|)$ cyclic subgroups and $G$ has exactly $d(|G|)$ cyclic subgroups if and only if $G$ is cyclic.
\end{teor}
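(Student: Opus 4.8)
The plan is to reduce the statement about cyclic subgroups to an inequality about element orders, to settle the nilpotent case by hand, and to obtain the general case from the main theorem. First I would record the elementary identity linking the two sides. Write $\mathrm{cyc}(G)$ for the number of cyclic subgroups of $G$, let $c_d(G)$ be the number of those of order $d$, and let $N_d(G)$ be the number of elements of $G$ of order $d$. A cyclic group of order $d$ has exactly $\varphi(d)$ generators, and two distinct cyclic subgroups of order $d$ have disjoint sets of generators, so $N_d(G)=\varphi(d)\,c_d(G)$; summing over $d\mid|G|$ gives $\mathrm{cyc}(G)=\sum_{d\mid|G|}c_d(G)=\sum_{x\in G}1/\varphi(o(x))$. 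In the cyclic group $C_n$ every divisor $d$ of $n$ is the order of a unique cyclic subgroup, so $\mathrm{cyc}(C_n)=d(n)$. Thus Theorem \ref{motiv} is precisely the statement that $\sum_{x\in G}1/\varphi(o(x))\ge\sum_{x\in C_n}1/\varphi(o(x))$ with equality exactly when $G$ is cyclic, i.e. the case $s=0$, $r=1$ of the main theorem (Theorem \ref{mainth}).

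To make the argument reasonably self-contained I would first dispose of the nilpotent case directly. The invariant $\mathrm{cyc}$ is multiplicative on direct products of groups of coprime orders: if $\gcd(|A|,|B|)=1$ then every cyclic subgroup of $A\times B$ is uniquely of the form $\langle a\rangle\times\langle b\rangle$ with $\langle a\rangle\le A$ and $\langle b\rangle\le B$, so $\mathrm{cyc}(A\times B)=\mathrm{cyc}(A)\,\mathrm{cyc}(B)$. Since $d(\cdot)$ is multiplicative and $\mathrm{cyc}(C_n)=d(n)$, for a nilpotent group $G=\prod_p P_p$ everything reduces to the case of a $p$-group $P$ of order $p^a$, which I would handle by induction on $a$. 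If $P$ is cyclic then $\mathrm{cyc}(P)=a+1=d(p^a)$. Otherwise take a maximal subgroup $M$, so $|M|=p^{a-1}$ and $\mathrm{cyc}(M)\ge d(p^{a-1})=a$ by induction; any $x\in P\setminus M$ generates a cyclic subgroup not contained in $M$, so $\mathrm{cyc}(P)\ge\mathrm{cyc}(M)+1\ge a+1$. If equality holds then there is a single cyclic subgroup $K$ not contained in $M$, which forces $P\setminus M\subseteq K$ and hence $|K|\ge p^a-p^{a-1}+1>p^{a-1}$, so $|K|=p^a$ and $P$ is cyclic. Multiplicativity then gives the result for nilpotent groups, with equality exactly when every $P_p$, hence $G$, is cyclic.

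The real obstacle is the non-nilpotent case, where the product decomposition is unavailable and, more seriously, the set of element orders of $G$ need not contain every divisor of $|G|$ — for example $A_4$ has no element of order $6$, so $c_6(A_4)=0$. Such missing divisors must be compensated by a surplus of cyclic subgroups of other orders (in $A_4$ there are three subgroups of order $2$ and four of order $3$, not one of each). The tool that forces this surplus is Frobenius's theorem: for every $d\mid|G|$ the number of $x\in G$ with $x^d=1$ is a multiple of $d$, so $\sum_{e\mid d}N_e(G)\ge d$, which bounds the $N_e(G)$, and hence the $c_e(G)$, from below. Converting these bounds into the clean inequality $\sum_{d\mid|G|}c_d(G)\ge d(|G|)$ together with the equality analysis is exactly what the main theorem accomplishes in the case $s=0$, $r=1$; arguing from scratch one would run an induction on $|G|$ (for instance through a maximal subgroup, or directly from the Frobenius bounds), and the delicate bookkeeping of missing divisors against surplus cyclic subgroups is where the work concentrates.
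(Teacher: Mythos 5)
Your proposal is correct and follows essentially the same route as the paper: the identity $\mathrm{cyc}(G)=\sum_{x\in G}1/\varphi(o(x))$ together with the case $(r,s)=(1,0)$ of Theorem \ref{mainth} is precisely the content of Corollary \ref{cormotiv}, from which the paper deduces Theorem \ref{motiv}. Your self-contained treatment of the nilpotent case (multiplicativity of $\mathrm{cyc}$ over coprime direct factors plus the maximal-subgroup induction for $p$-groups) is correct but redundant, since the appeal to the main theorem already covers all groups.
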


This theorem follows from Corollary \ref{cormotiv}.

Using the same techniques we also prove, in Section \ref{sprod}, the following:

\begin{teor} \label{thprod}
Let $G$ be a finite group of order $n$ and let $P_G := \prod_{x \in G} o(x)$. Then $P_G \leq P_{C_n}$ with equality if and only if $G$ is cyclic.
\end{teor}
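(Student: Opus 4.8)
My plan is to reduce everything to the counting functions $f_H(k):=\#\{x\in H:\ x^{k}=1\}$ and to a termwise comparison with the cyclic group. First I would rewrite the product. Grouping elements of $G$ by order gives $P_G=\prod_{d\mid n}d^{\,a_d}$ with $a_d=\#\{x\in G:\ o(x)=d\}$, but it is cleaner to take logarithms and use $\log o(x)=\sum_{p}\sum_{i\ge1}[p^{i}\mid o(x)]\log p$, where $p$ runs over primes. Interchanging the sums yields
\[
\log P_G=\sum_{p\mid n}\log p\sum_{i\ge1}\#\{x\in G:\ p^{i}\mid o(x)\},
\]
and since $o(x)\mid n$, the condition $p^{i}\nmid o(x)$ (for $1\le i\le v_p(n)$, with $v_p$ the $p$-adic valuation) is equivalent to $x^{\,n/p^{\,v_p(n)-i+1}}=1$. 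Reindexing by $j=v_p(n)-i+1$ gives the clean formula
\[
\log P_G=\sum_{p\mid n}\log p\Bigl(v_p(n)\,n-\sum_{j=1}^{v_p(n)}f_G\bigl(n/p^{\,j}\bigr)\Bigr),
\]
and the identical formula holds for $C_n$, with $f_{C_n}(n/p^{\,j})=\gcd(n/p^{\,j},n)=n/p^{\,j}$.

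For the inequality I would invoke Frobenius's theorem: the number of solutions of $x^{k}=1$ in $G$ is divisible by $\gcd(k,n)$, so $f_G(n/p^{\,j})\ge n/p^{\,j}=f_{C_n}(n/p^{\,j})$ for every $p\mid n$ and every $j$. Subtracting the two formulas above,
\[
\log P_{C_n}-\log P_G=\sum_{p\mid n}\log p\sum_{j=1}^{v_p(n)}\Bigl(f_G(n/p^{\,j})-\tfrac{n}{p^{\,j}}\Bigr)\ \ge\ 0,
\]
which is exactly $P_G\le P_{C_n}$.

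The equality case is where the real work is. One direction is trivial; for the converse, equality forces every summand above to vanish, and I would use only the weakest of these: $f_G(n/p)=n/p$ for every prime $p\mid n$. Fixing $p$ and writing $e=v_p(n)$, the identity $n-f_G(n/p)=n(1-1/p)>0$ produces an element of $p$-part order $p^{e}$, which forces a Sylow $p$-subgroup $P$ to be cyclic of order $p^{e}$. Then I would count the elements $x$ with $v_p(o(x))=e$ by recording their $p$-part: such a $p$-part ranges over the generators of the various Sylow $p$-subgroups, the fibre over a generator $g$ of a Sylow subgroup $P'$ is the set of $p'$-elements of $C_G(g)=C_G(P')$, and $P'$ being a \emph{central} Sylow subgroup of $C_G(P')$ splits off a normal $p$-complement there, so that fibre has size $|C_G(P')|/p^{e}$. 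Summing over the $n_p=|G:N_G(P)|$ Sylow subgroups and their $\varphi(p^{e})$ generators gives $n(1-1/p)=n_p\,|C_G(P)|\,(p-1)/p$, hence $n_p\,|C_G(P)|=n=n_p\,|N_G(P)|$, so $C_G(P)=N_G(P)$. Now Burnside's normal $p$-complement theorem applies for every $p\mid n$, so $G$ is nilpotent; being nilpotent with all Sylow subgroups cyclic, $G$ is the direct product of cyclic groups of pairwise coprime prime-power orders, i.e.\ $G\cong C_n$.

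The inequality itself is just a bookkeeping identity together with Frobenius's divisibility theorem, and should be routine. I expect the main obstacle to be the equality case: extracting genuine structural information from the single numerical equation $f_G(n/p)=n/p$, that is, performing the count of elements of maximal $p$-part carefully enough to land on $N_G(P)=C_G(P)$, and then combining Burnside's transfer theorem over all primes with the cyclicity of the Sylow subgroups to conclude $G\cong C_n$.
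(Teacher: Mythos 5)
Your proof is correct. The inequality half is essentially the paper's argument in different clothing: your identity $\log P_G=\sum_{p\mid n}\log p\,\bigl(v_p(n)\,n-\sum_{j=1}^{v_p(n)}f_G(n/p^{\,j})\bigr)$ is exactly the formula $P_G=n^{n}/\prod_i p_i^{B_i}$ that the paper derives, except that you obtain it by direct double counting of the indicator $[p^i\mid o(x)]$ rather than by M\"obius inversion of $A(d)=\#\{x: o(x)=d\}$ against $B(k)=f_G(k)$; both then conclude by Frobenius's theorem applied to the divisors $n/p^{\,j}$. The equality case is where you genuinely diverge. Both arguments use only the conditions $f_G(n/p)=n/p$ for $p\mid n$, and both first deduce that the Sylow subgroups are cyclic from the existence of $n-n/p>0$ elements of maximal $p$-part. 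But the paper then stays elementary: from $f_G(n/p)=n/p$ and $f_G(n/q)=n/q$ a union bound gives at least $n-n/p-n/q\geq n/6>0$ elements of order divisible by $p^{c_p}q^{c_q}$, whence some Sylow $q$-subgroup centralizes $P$ for every $q$, so each Sylow subgroup is central and cyclic and $G$ is cyclic --- no transfer theory needed. You instead fibre the $n(1-1/p)$ elements of maximal $p$-part over their $p$-parts, identify each fibre with the normal $p$-complement of $C_G(P')$ (which exists because $P'$ is a central Sylow subgroup there), and extract $n_p\,|C_G(P)|=n$, hence $N_G(P)=C_G(P)$, hence a normal $p$-complement for every $p$ by Burnside, hence nilpotency and cyclicity. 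Your count is correct (distinct generators determine distinct cyclic Sylow subgroups, so the $n_p\varphi(p^e)$ fibres are disjoint and exhaust the set), and it is a nice structural argument, but it invokes Burnside's normal $p$-complement theorem twice where the paper's pairwise-commuting-Sylows count gets the same conclusion with bare hands. One small presentational point: your appeal to ``Frobenius: the number of solutions of $x^k=1$ is divisible by $\gcd(k,n)$'' should be paired with the observation that this solution count is at least $1$ (it contains the identity), so divisibility really does give $f_G(n/p^{\,j})\geq n/p^{\,j}$; this is implicit in what you wrote and is exactly how the paper uses $(\ast)$.
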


We also obtain a very interesting characterization of nilpotency (Theorem \ref{mainth}(2)):

\begin{teor}
Let $r < 0$ be a real number and let $G$ be a finite group of order $n$. Then $$\sum_{x \in G} {\left( \frac{o(x)}{\varphi(o(x))} \right)}^r \geq \sum_{x \in C_n} {\left( \frac{o(x)}{\varphi(o(x))} \right)}^r$$ and equality holds if and only if $G$ is nilpotent.
\end{teor}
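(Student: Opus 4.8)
\emph{Proof proposal.} Write $\psi(x):=o(x)/\varphi(o(x))$. Since $\varphi(m)=m\prod_{p\mid m}(1-1/p)$, we have $\psi(x)=\prod_{p\mid o(x)}\frac{p}{p-1}$, so $\psi(x)$ depends only on the set $\pi(o(x))$ of primes dividing $o(x)$. Write $n=\prod_p p^{a_p}$ and set $c_p:=\bigl(\frac{p}{p-1}\bigr)^{r}$; since $r<0$ and $\frac{p}{p-1}>1$ we have $0<c_p<1$ and $c_p^{-1}-1>0$. The plan is to express $\sum_{x\in G}\psi(x)^{r}$ as a combination, with \emph{positive} coefficients, of the quantities
\[
t_S(G):=\#\{x\in G:\ \gcd(o(x),p)=1\text{ for all }p\in S\}\qquad(S\subseteq\pi(n)),
\]
and then bound each $t_S(G)$ from below using Frobenius's counting theorem.

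First I would prove the identity
\[
\sum_{x\in G}\psi(x)^{r}=\Bigl(\frac{n}{\varphi(n)}\Bigr)^{r}\sum_{S\subseteq\pi(n)}\Bigl(\prod_{p\in S}(c_p^{-1}-1)\Bigr)\,t_S(G).
\]
For each $x$ one has $\prod_{p\mid o(x)}c_p=\bigl(\prod_{p\mid n}c_p\bigr)\prod_{p\mid n}\bigl(1+(c_p^{-1}-1)[\,p\nmid o(x)\,]\bigr)$ (the bracket being $1$ when $p\nmid o(x)$ and $0$ otherwise): the $p$-th factor on the right is $c_p$ if $p\mid o(x)$ and $1$ otherwise. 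Expanding the last product over subsets $S\subseteq\pi(n)$ and summing over $x\in G$, the term indexed by $S$ collects precisely the $x$ with $o(x)$ coprime to every prime of $S$, whose number is $t_S(G)$. Note $\prod_{p\mid n}c_p=(n/\varphi(n))^{r}>0$, and every coefficient $(n/\varphi(n))^{r}\prod_{p\in S}(c_p^{-1}-1)$ is strictly positive (the empty product being $1$).

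Next I would bound $t_S(G)$. The elements counted by $t_S(G)$ are exactly the solutions in $G$ of $x^{m_S}=1$, where $m_S:=\prod_{p\in\pi(n)\setminus S}p^{a_p}$; by Frobenius's theorem their number is divisible by $\gcd(m_S,n)=m_S$, hence, being positive, is at least $m_S$. On the other hand, the elements of $C_n$ whose order is coprime to $S$ form the unique subgroup of $C_n$ of order $m_S$, so $t_S(C_n)=m_S$. Thus $t_S(G)\ge t_S(C_n)$ for all $S$, and substituting this termwise into the identity above---legitimate since all coefficients are positive---gives $\sum_{x\in G}\psi(x)^{r}\ge\sum_{x\in C_n}\psi(x)^{r}$.

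Finally, for the equality case: the $S=\varnothing$ terms on the two sides agree, so equality forces $t_S(G)=t_S(C_n)=m_S$ for every nonempty $S\subseteq\pi(n)$. Taking $S=\pi(n)\setminus\{p\}$ shows that $G$ has exactly $p^{a_p}$ elements of $p$-power order; since two distinct Sylow $p$-subgroups would together account for more than $p^{a_p}$ such elements, the Sylow $p$-subgroup of $G$ is unique, hence normal, for every $p\mid n$, so $G$ is nilpotent. Conversely, if $G=\prod_p P_p$ is the direct product of its Sylow subgroups, then every nonidentity element of $P_p$ has order a power of $p$, and since $o\bigl((x_p)_p\bigr)=\prod_p o(x_p)$ with the $o(x_p)$ pairwise coprime, $\sum_{x\in G}\psi(x)^{r}=\prod_p\sum_{x\in P_p}\psi(x)^{r}=\prod_p\bigl(1+(p^{a_p}-1)c_p\bigr)=\sum_{x\in C_n}\psi(x)^{r}$. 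I do not foresee a serious obstacle; the one idea that makes everything work is to run the inclusion--exclusion ``dually'', in terms of elements whose order is \emph{coprime} to a set $S$ rather than divisible by all of $S$---this is exactly what makes the coefficients positive---after which Frobenius's count and the elementary Sylow counting in the equality step finish the job.
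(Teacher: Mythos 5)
Your proof is correct and follows essentially the same route as the paper: the identity you derive by expanding $\prod_{p\mid n}\bigl(1+(c_p^{-1}-1)[\,p\nmid o(x)\,]\bigr)$ is exactly the paper's key combinatorial formula (Lemma~\ref{nov}) specialized to $s=r<0$, where the only surviving coefficients $g^{r,r}_{k,n/k}$ are those with $(k,n/k)=1$, i.e.\ $k=m_S$, and these coefficients $\prod_{p\notin S}c_p\prod_{p\in S}(1-c_p)$ agree term by term with yours. The remaining ingredients --- Frobenius's theorem giving $t_S(G)\ge m_S$, and the equality analysis via uniqueness (hence normality) of the Sylow subgroups --- are the same as in the paper's treatment of the case $s=r<0$.
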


Let us be more specific about what we actually do in the paper. We prove the following result.

\begin{teor} \label{mainth}
Let $r,s$ be two real numbers, let $G$ be a finite group of order divisible by $n$ and let $$R_{G,n}(r,s) := \sum_{x \in G, o(x)|n} \frac{o(x)^s}{\varphi(o(x))^r}, \hspace{1cm} T_{G,n}(r,s) := R_{G,n}(r,s) - R_{C_{|G|},n}(r,s).$$Set $R_G(r,s) := R_{G,|G|}(r,s)$ and $T_G(r,s) := T_{G,|G|}(r,s)$.
\begin{enumerate}
\item If $s < r$ and $s \leq 0$ then $T_{G,n}(r,s) \geq 0$ with equality if and only if $G$ contains a unique cyclic subgroup of order $m$, for every divisor $m$ of $n$.
\item If $s = r < 0$ then $T_{G,n}(r,s) \geq 0$ with equality if and only if $G$ contains a unique subgroup of order $n$ and such subgroup is nilpotent.
\item If $r \leq s-1$ and $s \geq 1$ then $T_{G}(r,s) \leq 0$ with equality if and only if $G$ is cyclic.
\item If $G$ is nilpotent and non-cyclic then the sign of $T_{G}(r,s)$ equals the sign of $r-s$.
\end{enumerate}
\end{teor}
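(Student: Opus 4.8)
The plan is to turn every $R_{G,n}(r,s)$ into a sum over cyclic subgroups and then into a divisor sum that can be attacked by Möbius inversion over the divisor lattice. Writing $c_m(G)$ for the number of cyclic subgroups of $G$ of order $m$, each of which has $\varphi(m)$ generators, one gets $R_{G,n}(r,s)=\sum_{m\mid n}c_m(G)\,m^s\varphi(m)^{1-r}$, and since $c_m(C_{|G|})=1$ for every $m\mid n$ (because $n\mid |G|$), also $T_{G,n}(r,s)=\sum_{m\mid n}(c_m(G)-1)\,m^s\varphi(m)^{1-r}$. Setting $\theta_G(d):=|\{x\in G:x^d=1\}|=\sum_{e\mid d}\varphi(e)c_e(G)$ and $A_d:=\theta_G(d)-d=\sum_{e\mid d}\varphi(e)(c_e(G)-1)$, Frobenius' theorem ($d\mid\theta_G(d)$ for $d\mid|G|$) gives $A_d\ge 0$ for all $d\mid n$ and $A_{|G|}=0$, while Möbius inversion yields $\varphi(m)(c_m(G)-1)=\sum_{d\mid m}\mu(m/d)A_d$. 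Substituting and reindexing gives the key identity
\[
T_{G,n}(r,s)=\sum_{d\mid n}A_d\,w_d,\qquad w_d=\sum_{k\mid n/d}\mu(k)\,g(dk),\qquad g(t):=\frac{t^s}{\varphi(t)^r},
\]
and since $g$ is multiplicative and $\mu$ vanishes off the squarefree integers, $w_d=\bigl(\prod_{p\nmid n/d}g(p^{v_p(d)})\bigr)\,\prod_{p\mid n/d}\bigl(g(p^{v_p(d)})-g(p^{v_p(d)+1})\bigr)$. As each $g(p^j)$ is positive, the sign of $w_d$ is that of the last product, so everything reduces to the monotonicity of $j\mapsto g(p^j)$, for which a one-line computation gives $g(p^{j+1})/g(p^j)=p^{s-r}$ when $j\ge 1$ and $g(p)/g(1)=p^s/(p-1)^r$.

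Parts (1), (2), (4) then follow from this dictionary. For (1), the hypotheses $s\le 0$ and $s<r$ give $(p-1)^r\ge p^s$ and $p^{s-r}\le 1$, so $g$ is nonincreasing along $p$-powers, hence $w_d\ge 0$ and $T_{G,n}(r,s)\ge 0$; for equality every $A_dw_d$ must vanish, and as the only possibly-zero factor among the $g(p^j)-g(p^{j+1})$ is $g(1)-g(2)$ in the boundary case $s=0$, one gets $A_d=0$ for enough $d$ to force, via the inversion formula, $c_m(G)=1$ for all $m\mid n$ (the case $s=0$ with $n$ even needs a short extra argument isolating the $2$-part of the relevant solution set). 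For (2), $s=r<0$ makes $g(p^j)$ constant for $j\ge 1$ and $g(1)>g(p)$, so $w_d>0$ exactly when $d$ is a unitary divisor of $n$ (i.e. $\gcd(d,n/d)=1$) and $w_d=0$ otherwise; thus $T_{G,n}(r,r)\ge 0$, with equality iff $\theta_G(d)=d$ for every unitary divisor $d$ of $n$, and one identifies this with ``$G$ has a (necessarily unique and normal) subgroup of order $n$ which is nilpotent'' by applying, at $d=n$ and at the prime powers $d=p^{v_p(n)}$, the classical theorem that $\theta_G(m)=m$ forces the solution set of $x^m=1$ to be a subgroup. For (4), a nilpotent $G$ is the internal direct product of its Sylow subgroups $P_1,\dots,P_k$, and since orders of elements of coprime factors multiply, $R_G(r,s)=\prod_iR_{P_i}(r,s)$ (and likewise for $C_{|G|}$ with its cyclic Sylow factors); for a non-cyclic $p$-group $P$ of order $p^a$ the identity above specialises to $R_P(r,s)-R_{C_{p^a}}(r,s)=(1-p^{s-r})\sum_{i=1}^{a-1}\bigl(\theta_P(p^i)-p^i\bigr)g(p^i)$, where the sum is positive because $P$ has no element of order $p^a$ and hence $\theta_P(p^{a-1})-p^{a-1}=\varphi(p^a)>0$; as the sign of $1-p^{s-r}$ is the sign of $r-s$, this settles the $p$-group case, and since all the $R_{P_i}(r,s)$ are positive with at least one non-cyclic factor, multiplying gives the claim for every nilpotent non-cyclic $G$.

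The genuinely different — and, I expect, hardest — part is (3): under $r\le s-1$ and $s\ge 1$ the function $g$ is strictly increasing along $p$-powers, so the weights $w_d$ alternate in sign and the identity above no longer delivers the bound termwise. Here I would instead induct on $|G|$ in the style of the Amiri--Jafarian Amiri--Isaacs theorem on sums of element orders (the case $r=0$, $s=1$): assuming $G$ non-cyclic, isolate a well-chosen proper subgroup (a maximal subgroup, or a Hall subgroup when $G$ is not a $p$-group), apply the inductive hypothesis inside it, and use the multiplicativity of $m\mapsto m^s\varphi(m)^{1-r}$ together with the size constraints $s\ge 1$, $r\le s-1$ to dominate the contribution of the cyclic subgroups of $G$ not contained in that subgroup; the nilpotent non-cyclic case can be cleared beforehand using part (4). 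The two points where I expect real work are closing this induction (the right choice of subgroup and the control of the cross terms) and the equality statement of (2), which leans on the non-elementary structure theory of solution sets of $x^m=1$; the inequalities in (1), (2), (4) themselves amount, after the reduction above, to Frobenius' theorem plus the elementary monotonicity of $j\mapsto g(p^j)$.
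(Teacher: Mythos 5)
Your treatment of parts (1), (2) and (4) is essentially the paper's own argument: your identity $T_{G,n}(r,s)=\sum_{d\mid n}A_d w_d$ with $A_d=\theta_G(d)-d\ge 0$ (Frobenius) and the multiplicative product formula for the weights $w_d$ is exactly the paper's key combinatorial lemma (Lemma \ref{nov} applied with $A(m)=c_m\varphi(m)$, $B(m)=\theta_G(m)$), and your sign analysis of the factors $g(p^j)-g(p^{j+1})$ matches the paper's case analysis. The two places you flag but do not carry out are genuinely needed and are done in the paper: for (1) with $s=0$ one must show $\theta_G(k)=k$ also for odd $k$ with $n/k$ even (the paper compares $T_k\subseteq T_{2k}$ and derives a contradiction from an element of order $2$), and for (2) one must also prove the \emph{converse} (a unique nilpotent subgroup of order $n$ forces $\theta_G(k)=k$ for all unitary divisors $k$), which the paper does via normality of the Sylow subgroups of that unique subgroup. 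These are small but not negligible omissions.

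The genuine gap is part (3), which you correctly identify as the hard case but for which your sketch would not close as stated. Inducting on $|G|$ by passing to ``a maximal subgroup, or a Hall subgroup'' and controlling ``cross terms'' is not the mechanism that works: the inductive step in the Amiri--Jafarian Amiri--Isaacs scheme (and in the paper) is applied to a \emph{quotient}, not a subgroup, and the crucial input is an averaging argument. Concretely: if $\psi(G)\ge\psi(C_n)$, then since $C_n$ has $\varphi(n)$ elements of order $n$ and $\varphi(n)\ge n/p$ for $p$ the largest prime divisor of $n$, some $x\in G$ satisfies $o(x)^s/\varphi(o(x))^r> n^s/(p\,\varphi(n)^r)$; the hypotheses $s\ge 1$ and $r\le s-1$ then force $|G:\langle x\rangle|<p$, so $\langle x\rangle$ contains a Sylow $p$-subgroup $P$ which is cyclic and (by Sylow) normal. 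One then needs the coset estimate $\psi(Px)\le \frac{o(Px)^s}{\varphi(o(Px))^r}\psi(P)$, with equality iff $x$ centralizes $P$, to get $\psi(G)\le\psi(P)\psi(G/P)$, apply induction to $G/P$, and conclude $P$ is central and $G$ cyclic. None of these steps (the averaging bound, the identification of the normal cyclic Sylow subgroup, the coset lemma and its equality analysis) appears in your proposal, and the subgroups you propose to isolate would not produce the factorization $\psi(G)\le\psi(P)\psi(G/P)$ on which the induction rests. Until this is supplied, part (3) is unproved.
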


We prove this in Section \ref{main}. For the case $s \leq \min \{0,r\}$ we use Lemma \ref{nov} (a combinatorial tool, which is a key result in this paper) and for the case $s \geq \max \{1,r+1\}$ we adapt the arguments of \cite{isaacs}. In section \ref{examples}, for any positive integer $\gamma$, we construct infinitely many finite groups $G$ with exactly $d(|G|)+\gamma$ cyclic subgroups.

\section{The main result} \label{main}

In this section we prove Theorem \ref{mainth}.

\ 

As usual $\mathbb{N}$ denotes the set of natural numbers (in particular $0 \not \in \mathbb{N}$).

Denote by $\mu: \mathbb{N} \to \{0,1,-1\}$ (the M{\"o}bius function) the map taking $n$ to $0$ if $n$ is divisible by a square different from $1$, to $1$ if $n$ is a product of an even number of distinct primes and to $-1$ if $n$ is a product of an odd number of distinct primes. The following result is well-known.

\begin{prop}[M{\"o}bius inversion formula]
Let $f,g: \mathbb{N} \to \mathbb{C}$ be two functions such that $g(n) = \sum_{d|n} f(d)$ for all $n \in \mathbb{N}$. Then $f(n) = \sum_{d|n} \mu(d) g(n/d)$ for all $n \in \mathbb{N}$.
\end{prop}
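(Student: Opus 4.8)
The plan is the completely standard one: plug the hypothesis into the claimed identity and interchange the order of summation. Fix $n\in\mathbb{N}$ and write $h(n):=\sum_{d\mid n}\mu(d)g(n/d)$; I want to show $h(n)=f(n)$. Substituting $g(n/d)=\sum_{e\mid n/d}f(e)$ gives
$$h(n)=\sum_{d\mid n}\mu(d)\sum_{e\mid n/d}f(e)=\sum_{\substack{d,e\in\mathbb{N}\\ de\mid n}}\mu(d)f(e)=\sum_{e\mid n}f(e)\Bigl(\sum_{d\mid n/e}\mu(d)\Bigr),$$
where in the middle step I use that a pair $(d,e)$ contributes precisely when $d\mid n$ and $e\mid n/d$, equivalently when $de\mid n$, equivalently when $e\mid n$ and $d\mid n/e$.

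The one nontrivial ingredient is the elementary identity $\sum_{d\mid k}\mu(d)=1$ if $k=1$ and $\sum_{d\mid k}\mu(d)=0$ if $k>1$. I would prove this by observing that $k\mapsto\sum_{d\mid k}\mu(d)$ is multiplicative (it is the Dirichlet convolution of the multiplicative functions $\mu$ and the constant function $1$), hence determined by its values on prime powers; for $k=p^{a}$ with $a\geq1$ only the squarefree divisors $1$ and $p$ contribute, giving $\mu(1)+\mu(p)=1-1=0$, while for $k=1$ the sum is $\mu(1)=1$. Applying this with $k=n/e$ in the displayed formula, every term of the outer sum vanishes except the one with $e=n$, which contributes $f(n)\cdot 1$; hence $h(n)=f(n)$.

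Equivalently, and more slickly, one can work in the ring of arithmetic functions under Dirichlet convolution $*$: the identity just proved says $\mu*\mathbf{1}=\varepsilon$, where $\mathbf{1}$ is the constant $1$ and $\varepsilon$ is the convolution identity (the indicator of $\{1\}$); the hypothesis reads $g=\mathbf{1}*f$, whence $\mu*g=\mu*\mathbf{1}*f=\varepsilon*f=f$. There is essentially no obstacle in this proof; the only points requiring a line of care are the reindexing of the double sum over pairs $(d,e)$ with $de\mid n$ and, for a self-contained account, the computation $\sum_{d\mid k}\mu(d)=\varepsilon(k)$.
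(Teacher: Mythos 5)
Your proof is correct and complete; the paper itself gives no proof of this proposition (it simply cites it as well-known), and your argument --- substituting the hypothesis, interchanging the double sum over pairs $(d,e)$ with $de \mid n$, and invoking the identity $\sum_{d\mid k}\mu(d)=0$ for $k>1$ --- is the standard one. The Dirichlet-convolution reformulation at the end is a clean way to package the same computation; nothing further is needed.
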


An important example is the following. It is well-known that$\sum_{d|n} \varphi(d) = n$ for all $n \in \mathbb{N}$. This is because in the cyclic group of order $n$ for any divisor $d$ of $n$ there are exactly $\varphi(d)$ elements of order $d$. Applying the M{\"o}bius inversion formula we obtain $\varphi(n) = \sum_{d|n} \mu(n/d) d$.

The following is our key combinatorial tool.

\begin{lemma} \label{nov}
Let $A,B: \mathbb{N} \to \mathbb{C}$ be two functions such that $$\sum_{m|n} A(m) = B(n) \hspace{1cm} \forall n \in \mathbb{N}.$$ For $j,m \in \mathbb{N}$ and $r,s$ two real numbers such that $s \leq \min \{0,r\}$ set $$g^{r,s}_{m,j} := \sum_{i|j} \frac{\mu(i) (mi)^s}{\varphi(mi)^r}.$$ Then we have:
\begin{enumerate}
\item Write the prime factorizations of $j$ and $m$ as $j = {p_1}^{t_1} \cdots {p_k}^{t_k}$ and $m = m' {p_1}^{\alpha_1} \cdots {p_l}^{\alpha_l}$ where $l \leq k$ and $(m',j)=1$. Then $$g_{m,j}^{r,s} = \frac{m^s}{\varphi(m)^r} \cdot \prod_{t=1}^l \left( 1 - p_t^{s-r} \right) \cdot \prod_{t=l+1}^k \left(1-\frac{p_t^s}{(p_t-1)^r} \right).$$In particular $g_{m,j}^{r,s} \geq 0$ always and $g_{m,j}^{r,s} = 0$ if and only if one of the following holds:

\begin{itemize}
\item $s = r = 0$ and $j \neq 1$.
\item $s = r \neq 0$ and $l \geq 1$, i.e. $j$ and $m$ are not coprime.
\item $s = 0 \neq r$ and $p_t = 2$ for some $t \in \{l+1,\ldots,k\}$, i.e. $j$ is even and $m$ is odd.
\end{itemize}

\item $\sum_{m|n} \frac{m^s}{\varphi(m)^r} A(m) = \sum_{k|n} g_{k,n/k}^{r,s} B(k)$.

\item Suppose $\alpha, \beta$ are positive functions $\mathbb{N} \to \mathbb{R}$ with $\beta(n) = \sum_{m|n} \alpha(m)$ and $B(k) \geq \beta(k)$ for all divisors $k$ of $n$. Then $$\sum_{m|n} \frac{m^s}{\varphi(m)^r} A(m) \geq \sum_{m|n} \frac{m^s}{\varphi(m)^r} \alpha(m).$$Moreover equality holds if and only if $B(k) = \beta(k)$ for all divisors $k$ of $n$ such that $g_{k,n/k}^{r,s} \neq 0$.
\end{enumerate}
\end{lemma}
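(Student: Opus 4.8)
The plan is to prove the three parts in order; once part (1) is in hand, parts (2) and (3) follow formally.

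\emph{Part (1).} The first observation is that $\mu(i)=0$ unless $i$ is squarefree, so the sum defining $g^{r,s}_{m,j}$ runs only over squarefree divisors of $j$, i.e. over the products $i=\prod_{t\in S}p_t$ for $S\subseteq\{1,\ldots,k\}$, for which $\mu(i)=(-1)^{|S|}=\prod_{t\in S}(-1)$. I would then compute $\varphi(mi)/\varphi(m)$ from the multiplicativity of $\varphi$ and the decomposition $m=m'p_1^{\alpha_1}\cdots p_l^{\alpha_l}$ with $(m',j)=1$: for $t\le l$ the exponent of $p_t$ in $mi$ exceeds that in $m$ by one exactly when $t\in S$, and for $t>l$ the prime $p_t$ divides $mi$ exactly (to the first power) when $t\in S$, so $\varphi(mi)/\varphi(m)=\prod_{t\in S,\ t\le l}p_t\cdot\prod_{t\in S,\ t>l}(p_t-1)$. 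Therefore $\frac{(mi)^s}{\varphi(mi)^r}=\frac{m^s}{\varphi(m)^r}\prod_{t\in S,\ t\le l}p_t^{s-r}\prod_{t\in S,\ t>l}\frac{p_t^s}{(p_t-1)^r}$; multiplying by $\mu(i)$ and summing over all $S$, the identity $\sum_{S}\prod_{t\in S}a_t=\prod_t(1+a_t)$ turns the sum into the asserted product.

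For the sign and the equality conditions I would examine each factor under the hypothesis $s\le\min\{0,r\}$. The prefactor $m^s/\varphi(m)^r$ is strictly positive. Each factor $1-p_t^{s-r}$ with $t\le l$ satisfies $0<p_t^{s-r}\le1$ (as $s-r\le0$ and $p_t\ge2$), so it is $\ge0$ and vanishes iff $s=r$. Each factor $1-\frac{p_t^s}{(p_t-1)^r}$ with $t>l$ is $\ge0$ because $p_t^s\le(p_t-1)^r$: if $r\ge0$ this reads $p_t^s\le1\le(p_t-1)^r$ (using $s\le0$), and if $r<0$ then $s\le r<0$ and taking logarithms gives strict inequality (for $p_t\ge3$ because $s\ln p_t\le r\ln p_t<r\ln(p_t-1)$, and for $p_t=2$ because $2^s<1=(2-1)^r$); tracking when equality can occur shows this factor vanishes precisely when $s=0$ and either $r=0$ or $p_t=2$. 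Hence $g^{r,s}_{m,j}=0$ iff some factor vanishes, and unwinding this — reading ``$l\ge1$'' as ``$j$ and $m$ are not coprime'' and ``$p_t=2$ for some $t>l$'' as ``$j$ even and $m$ odd'' — reproduces exactly the three listed cases. This case analysis is the only genuinely delicate point.

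\emph{Part (2).} By the M{\"o}bius inversion formula, $B(n)=\sum_{m|n}A(m)$ gives $A(m)=\sum_{d|m}\mu(d)B(m/d)$, so $$\sum_{m|n}\frac{m^s}{\varphi(m)^r}A(m)=\sum_{m|n}\sum_{d|m}\frac{m^s}{\varphi(m)^r}\mu(d)B(m/d);$$ re-indexing by $k=m/d$, i.e. writing $m=kd$ with $k\mid n$ and $d\mid n/k$, collects the inner sum into $\sum_{d\mid n/k}\frac{\mu(d)(kd)^s}{\varphi(kd)^r}=g^{r,s}_{k,n/k}$, which yields $\sum_{k|n}g^{r,s}_{k,n/k}B(k)$.

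\emph{Part (3).} Apply part (2) to $(A,B)$ and to $(\alpha,\beta)$ and subtract: $$\sum_{m|n}\frac{m^s}{\varphi(m)^r}A(m)-\sum_{m|n}\frac{m^s}{\varphi(m)^r}\alpha(m)=\sum_{k|n}g^{r,s}_{k,n/k}\bigl(B(k)-\beta(k)\bigr).$$ By part (1) each $g^{r,s}_{k,n/k}\ge0$, and by hypothesis each $B(k)-\beta(k)\ge0$, so the right-hand side is $\ge0$, and it equals $0$ iff every summand does, i.e. iff $B(k)=\beta(k)$ for every $k\mid n$ with $g^{r,s}_{k,n/k}\neq0$. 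The main obstacle in the whole proof is the factor-by-factor sign and vanishing analysis in part (1); everything else is bookkeeping.
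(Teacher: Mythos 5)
Your proposal is correct and follows essentially the same route as the paper: expand $g^{r,s}_{m,j}$ over squarefree divisors of $j$ using the multiplicativity of $\varphi$ to factor it as the stated product, analyze the sign and vanishing of each factor under $s\le\min\{0,r\}$, and then derive (2) by M\"obius inversion and (3) by applying (2) to both pairs and using non-negativity of the coefficients. The factor-by-factor case analysis (including the strict inequality $p_t^s<(p_t-1)^r$ when $r<0$) matches the paper's equality conditions exactly.
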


\begin{proof}
\begin{enumerate}
\item Observe that since $\mu(i)$ is zero whenever $i$ is not square-free, $g_{m,j}^{r,s} = g_{m,p_1 \cdots p_k}^{r,s}$, hence we may assume $t_1 = \ldots = t_k = 1$. In the following computation the index $i$ will be written as ${p_1}^{\beta_1} \cdots {p_k}^{\beta_k}$ where $\beta_1,\ldots,\beta_k \in \{0,1\}$.
\begin{eqnarray}
g_{m,j}^{r,s} & = & m^s \sum_{i|j} \frac{\mu(i) i^s}{\varphi(mi)^r} = m^s \sum_{i|j} \frac{\mu(i) i^s}{\varphi({m'}{p_1}^{\alpha_1} \cdots {p_l}^{\alpha_l} \cdot i)^r} \nonumber \\ & = & \frac{m^s}{\varphi(m')^r} \sum_{i|j} \frac{\mu(i) i^s}{\varphi({p_1}^{\alpha_1} \cdots {p_l}^{\alpha_l} \cdot i)^r} \nonumber \\ & = & \frac{m^s}{\varphi(m')^r} \sum_{v \in \{1,\ldots,k\}, \beta_v \in \{0,1\}} \frac{\mu({p_1}^{\beta_1} \cdots {p_k}^{\beta_k}) {p_1}^{s \beta_1} \cdots {p_k}^{s \beta_k}}{\varphi({p_1}^{\alpha_1+\beta_1} \cdots {p_l}^{\alpha_l+\beta_l} {p_{l+1}}^{\beta_{l+1}} \cdots {p_k}^{\beta_k})^r} \nonumber \\ & = & \frac{m^s}{\varphi(m')^r} \sum_{v \in \{1,\ldots,k\}, \beta_v \in \{0,1\}} \frac{(-p_1^s)^{\beta_1} \cdots (-p_k^s)^{\beta_k}}{\prod_{t=1}^l (p_t-1)^r{p_t}^{r\alpha_t+r\beta_t-r} \prod_{t=l+1}^k (p_t-1)^{r\beta_t}} \nonumber \\ & = & \frac{m^s}{\varphi(m)^r} \sum_{v \in \{1,\ldots,k\}, \beta_v \in \{0,1\}} \frac{(-p_1^s)^{\beta_1} \cdots (-p_k^s)^{\beta_k}}{{p_1}^{r\beta_1} \cdots {p_l}^{r\beta_l} (p_{l+1}-1)^{r\beta_{l+1}} \cdots (p_k-1)^{r\beta_k}} \nonumber \\ & = & \frac{m^s}{\varphi(m)^r} \cdot \prod_{t=1}^l \left( 1-{p_t}^{s-r} \right) \cdot \prod_{t=l+1}^k \left(1-\frac{p_t^s}{(p_t-1)^r} \right). \nonumber
\end{eqnarray}
Since $s \leq r$ and $s \leq 0$, $g_{m,j}^{r,s}$ is a non-negative number, and it is zero if and only if either $p_t^{s-r}=1$ for some $t \in \{1,\ldots,l\}$, i.e. $l \geq 1$ and $s=r$, or $p_t^s=(p_t-1)^r$ for some $t \in \{l+1,\ldots,k\}$, i.e. $s=0$ and either $r=0$ or $p_t=2$ for some $t \in \{l+1,\ldots,k\}$, in other words $j$ is even and $m$ is odd.

\item We have
\begin{eqnarray}
\sum_{m|n} \frac{m^s}{\varphi(m)^r} A(m) & = & \sum_{m|n} \frac{m^s}{\varphi(m)^r} \sum_{k|m} B(k) \mu(m/k) = \sum_{k|m|n} \frac{m^s B(k) \mu(m/k)}{\varphi(m)^r} \nonumber \\ & = & \sum_{k|n,i|n/k} \frac{(ik)^s \mu(i)}{\varphi(ik)^r} B(k) = \sum_{k|n} \left( \sum_{i|n/k} \frac{\mu(i) (ik)^s}{\varphi(ik)^r} \right) B(k) \nonumber \\ & = & \sum_{k|n} g_{k,n/k}^{r,s} B(k). \nonumber
\end{eqnarray}

\item Since by point (1) $g_{k,n/k}^{r,s} \geq 0$ for all $k|n$, applying point (2) to $A,B$ and to $\alpha,\beta$ we obtain $$\sum_{m|n} \frac{m^s}{\varphi(m)^r} A(m) = \sum_{k|n} g_{k,n/k}^{r,s} B(k) \geq \sum_{k|n} g_{k,n/k}^{r,s} \beta(k) = \sum_{m|n} \frac{m^s}{\varphi(m)^r} \alpha(m).$$The statement about equality follows easily. This concludes the proof.
\end{enumerate}
\end{proof}

For $m$ a divisor of $|G|$ set $$c_m := |\{C \leq G\ :\ C\ \text{cyclic},\ |C|=m\}|,$$that is, the number of cyclic subgroups of $G$ of order $m$. Observe that $G$ has exactly $\varphi(m)c_m$ elements of order $m$. It follows that $G$ has exactly $\sum_{k|m} c_k \varphi(k)$ elements of order a divisor of $m$, that is, elements $x$ with the property that $x^m = 1$.

The following is a fundamental fact we will use in an essential way.
\begin{teor}[Frobenius \cite{frob} \cite{frobisaacsrob}] \label{frob}
Let $m$ be a divisor of $|G|$ and let $T_m$ be the set of elements $x \in G$ such that $x^m=1$. Then $m$ divides $|T_m|$.
\end{teor}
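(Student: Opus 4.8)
The statement is the classical theorem of Frobenius, and the plan is to prove it directly. For a finite group $H$ and a positive integer $k$ write $P_H(k) := |\{x \in H : x^k = 1\}|$, so that the claim is $m \mid P_G(m)$ whenever $m \mid |G|$; since $x^k = 1$ is equivalent to $x^{\gcd(k,|G|)} = 1$, there is no loss in assuming $m \mid |G|$ from the outset.

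The first move is to reduce to the case where $m$ is a prime power, by induction on the number of distinct primes dividing $m$. If $m$ is itself a prime power we are in the base case, treated below. Otherwise write $m = m_1 m_2$ with $m_1 = p^a$ the exact power of some prime $p$ dividing $m$ and $m_2 = m/m_1$, so that $\gcd(m_1,m_2) = 1$ and $m_1, m_2 > 1$ both divide $|G|$. Each $x$ with $x^m = 1$ generates a cyclic group of order dividing $m$, hence factors uniquely as $x = ab = ba$ with $a^{m_1} = 1$ and $b^{m_2} = 1$ (take $a = x^u$, $b = x^v$ with $u \equiv 1, 0$ and $v \equiv 0, 1$ modulo $m_1, m_2$), and conversely every such commuting pair yields a solution; this bijection gives
$$P_G(m) = \sum_{a \in G,\ a^{m_1} = 1} P_{C_G(a)}(m_2) = \sum_{a \in G,\ a^{m_2} = 1} P_{C_G(a)}(m_1),$$
the two forms coming from the two choices of which factor to peel off first. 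Since $\{a : a^{m_i} = 1\}$ is a union of conjugacy classes and conjugate elements have isomorphic centralizers, the right-hand sums may be reorganized over conjugacy classes as $\sum_{[a]} \frac{|G|}{|C_G(a)|} P_{C_G(a)}(m_j)$. Now the $\ell$-adic valuation of the class factor $|G|/|C_G(a)|$ equals $v_\ell(|G|) - v_\ell(|C_G(a)|)$, while $\gcd(m_2,|C_G(a)|)$ divides $P_{C_G(a)}(m_2)$ by the induction hypothesis (fewer primes) and $\gcd(m_1,|C_G(a)|)$ divides $P_{C_G(a)}(m_1)$ by the prime-power case applied to $C_G(a)$; in either sum a short check (splitting on whether $v_\ell(|C_G(a)|)$ is at least or less than $v_\ell(m_j)$, and using $v_\ell(|G|) \geq v_\ell(m)$) shows every summand is divisible by $m_j$. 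Hence $m_1 \mid P_G(m)$ and $m_2 \mid P_G(m)$, and as $\gcd(m_1,m_2) = 1$ we get $m \mid P_G(m)$.

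This leaves the prime-power case, $p^a \mid |G| \Rightarrow p^a \mid P_G(p^a)$, which is the heart of the matter. I would run an induction on $|G|$ anchored on a Sylow $p$-subgroup $P$: the set $X = \{x : x^{p^a} = 1\}$ consists of the $p$-elements of $G$ of order at most $p^a$, each lying in a conjugate of $P$, and one expresses $|X|$ in terms of the corresponding counts in $N_G(P)$, in proper subgroups and quotients, and ultimately in $p$-groups, where a further induction (splitting off a central subgroup of order $p$ and analysing the fibres of the resulting quotient map) closes the argument. The main obstacle is exactly here: the cheap argument — letting $P$ act on $X$ by conjugation, so that $|X|$ is congruent modulo $p$ to the number of elements of $Z(P)$ of order dividing $p^a$, which is a positive multiple of $p$ — only yields $p \mid |X|$, and the genuinely delicate point is Frobenius's counting, which promotes this from a congruence modulo $p$ to one modulo $p^a$. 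With the prime-power case in hand, the reduction above completes the proof.
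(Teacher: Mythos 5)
The paper does not prove this statement at all: it is quoted as a classical theorem of Frobenius, with references to Frobenius's 1907 paper and to the Isaacs--Robinson exposition, and is then used as a black box. So the only question is whether your proposed proof stands on its own, and it does not. Your reduction to the prime-power case is essentially sound: the unique factorization $x=ab=ba$ with $a^{m_1}=1$, $b^{m_2}=1$ does give $P_G(m)=\sum_{a,\,a^{m_1}=1} P_{C_G(a)}(m_2)$; the reorganization over conjugacy classes is legitimate because $\{a : a^{m_1}=1\}$ is closed under conjugation and $P_{C_G(a)}(m_j)$ depends only on the class of $a$; and the valuation bookkeeping (using $P_{C_G(a)}(m_j)=P_{C_G(a)}(\gcd(m_j,|C_G(a)|))$ and $v_\ell(|G|)\geq v_\ell(m_j)$) does extract $m_1\mid P_G(m)$ and $m_2\mid P_G(m)$, hence $m\mid P_G(m)$, from the inductive hypotheses.

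But all of this only transfers the entire burden to the statement $p^a\mid |G|\Rightarrow p^a\mid P_G(p^a)$, and for that you offer a plan, not a proof. The sentence ``one expresses $|X|$ in terms of the corresponding counts in $N_G(P)$, in proper subgroups and quotients, and ultimately in $p$-groups'' is exactly the part of Frobenius's argument that requires real work, and you yourself concede that the only concrete mechanism you name --- the conjugation action of a Sylow $p$-subgroup on $X$ --- yields only $p\mid|X|$, not $p^a\mid|X|$. Since the multi-prime reduction is routine and the prime-power case is the whole content of the theorem, the proof is incomplete as written. To close it you would need to actually carry out one of the known counting arguments for the prime-power case (for instance the inductive argument in the Isaacs--Robinson note cited by the paper, or Frobenius's original double induction over all divisors simultaneously); as it stands, the key step is asserted rather than proved.
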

Thus we can write $$|T_m| = \sum_{k|m} c_k \varphi(k) = m f(m) \hspace{1cm} (\ast)$$ where $f(m) \geq 1$ is an integer depending on $m$ and $G$.

\ 

Let $r,s$ be two real numbers, let $G$ be a finite group of order divisible by $n$ and let $$R_{G,n}(r,s) := \sum_{x \in G, o(x)|n} \frac{o(x)^s}{\varphi(o(x))^r}, \hspace{1cm} T_{G,n}(r,s) := R_{G,n}(r,s) - R_{C_{|G|},n}(r,s).$$Set $R_G(r,s) := R_{G,|G|}(r,s)$ and $T_G(r,s) := T_{G,|G|}(r,s)$. Note that $T_G(0,0)=0$. We proceed with the proof of Theorem \ref{mainth}.

\subsection{Case $s<r$, $s \leq 0$} \label{case2}

\ 

We prove that if $s < r$ and $s \leq 0$ then $T_{G,n}(r,s) \geq 0$ with equality if and only if $G$ contains a unique cyclic subgroup of order $m$, for every divisor $m$ of $n$.

\ 

Observe that $$R_{G,n}(r,s) = \sum_{m|n} \frac{m^s}{\varphi(m)^{r-1}} c_m.$$ Apply Lemma \ref{nov} to $A(n) = c_n \varphi(n)$, $B(n) = n f(n)$, $\alpha(n) = \varphi(n)$, $\beta(n) = n$. Since $s < r$, if $s < 0$ then $T_{G,n}(r,s) \geq 0$ and equality holds if and only if $f(k) = 1$ for all divisors $k$ of $n$, i.e. $G$ has a unique cyclic subgroup of order $k$ for all divisors $k$ of $n$.

Now suppose that $s=0$. Then following the above argument we find that $g_{k,n/k}^{r,s} \geq 0$ with equality if and only if $p_t=2$ for some $t \in \{l+1,\ldots,h\}$. In other words, equality holds if and only if whenever $k$ is a divisor of $n$ such that either $k$ is even or $n/k$ is odd, $f(k)=1$. We prove that $f(k)=1$ for all divisors $k$ of $n$, from which it follows that if $m$ is a divisor of $n$ then $c_m = \frac{1}{\varphi(m)} \sum_{k|m} k f(k) \mu(m/k) = \frac{1}{\varphi(m)} \sum_{k|m} k \mu(m/k) = 1$, i.e. $G$ has a unique cyclic subgroup of order $m$. So let $k$ be a divisor of $n$. If $k$ is even then $f(k)=1$, and the same is true if $n/k$ is odd, so now suppose that $k$ is odd and $n/k$ is even. In particular $2k$ divides $n$ and is even, hence $f(2k)=1$. We prove that $f(k)=1$. If by contradiction $f(k) \neq 1$ then since $f(k)$ is a positive integer (by Frobenius Theorem), $f(k) \geq 2$ hence $kf(k) \geq 2k$, that is, there are at least $2k$ elements $x \in G$ such that $x^{k}=1$. But these elements also verify $x^{2k}=1$, and $|T_{2k}| = 2kf(2k) = 2k$, i.e. there are exactly $2k$ elements $x$ in $G$ verifying $x^{2k}=1$. This means that every $x \in G$ such that $x^{2k}=1$ verifies $x^k=1$. Now since $n/k$ is even $G$ has an element $x$ of order $2$, and since $k$ is odd $x^k=x$; on the other hand $x^{2k} = x^2 = 1$, a contradiction. Conversely, if $c_m = 1$ for all divisor $m$ of $n$ then if $m$ divides $n$, $f(m) = \frac{1}{m} \sum_{k|m} c_k \varphi(k) = \frac{1}{m} \sum_{k|m} \varphi(k) = 1$.

\subsection{Case $s = r < 0$} \label{case3}

\ 

We prove that if $s=r < 0$ then $T_{G,n}(r,s) \geq 0$ with equality if and only if $G$ contains a unique subgroup of order $n$ and such subgroup is nilpotent.

\ 

As above, using Lemma \ref{nov}, we find $T_{G,n}(r,s) \geq 0$, and equality holds if and only if $f(k) = 1$ whenever $k$ is a divisor of $n$ such that $k$ and $n/k$ are coprime. Applying this to the case when $k$ is a prime power we find that writing $n = p_1^{a_1} \cdots p_u^{a_u}$, $G$ has a unique subgroup of order $p_i^{a_i}$ (which must then be normal in $G$) for $i = 1,\ldots,u$, thus the product of such subgroups is the unique subgroup of $G$ of order $n$, and it is nilpotent. Conversely, if $G$ has a unique subgroup $N$ of order $n$ and $N$ is nilpotent then every Sylow subgroup of $N$ is the unique subgroup of $G$ of its size. Indeed if $P$ is a Sylow subgroup of $N$ and $N = PH$ with $(|P|,|H|)=1$ and $Q$ is some subgroup of $G$ such that $|P|=|Q|$ then $HQ$ is a subgroup of $G$ of order $n$ (because $H \unlhd G$ being the Sylow subgroups of $N$ normal in $G$), hence $HQ=N$ and this implies $Q=P$. Hence if $k$ is a divisor of $n$ such that $(k,n/k)=1$ then $f(k)=1$.

\subsection{Case $r \leq s-1$ and $s \geq 1$} \label{case4}

\ 

We prove that if $r \leq s-1$ and $s \geq 1$ then $T_{G}(r,s) \leq 0$ with equality if and only if $G$ is cyclic.

\ 

The following arguments are the natural generalization of the arguments in \cite{isaacs}. For $X$ a subset of $G$ define $\psi(X) := R_X(r,s) := \sum_{x \in X} \frac{o(x)^s}{\varphi(o(x))^r}$. The following is Lemma C in \cite{isaacs}.

\begin{lemma} \label{largep}
Let $p$ be the largest prime divisor of the integer $n > 1$. Then $\varphi(n) \geq n/p$.
\end{lemma}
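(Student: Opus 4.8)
The plan is to reduce the claim to a purely multiplicative statement about the distinct prime divisors of $n$. Write the prime factorization as $n = p_1^{a_1} \cdots p_k^{a_k}$ with $p_1 < p_2 < \cdots < p_k = p$. Using the standard formula $\varphi(n) = \prod_{i=1}^k p_i^{a_i-1}(p_i-1)$ and noting that $n/p = p_1^{a_1}\cdots p_{k-1}^{a_{k-1}} p_k^{a_k-1}$, the inequality $\varphi(n) \geq n/p$ is equivalent, after dividing both sides by the common factor $\prod_{i=1}^k p_i^{a_i-1}$, to
$$\prod_{i=1}^k (p_i-1) \geq p_1 p_2 \cdots p_{k-1}.$$

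Next I would establish this reduced inequality. The key elementary observation is that since the $p_i$ are distinct positive integers listed in increasing order, $p_{i+1} \geq p_i + 1$, i.e. $p_{i+1} - 1 \geq p_i$ for each $i = 1, \ldots, k-1$. Hence
$$\prod_{i=1}^k (p_i - 1) \;\geq\; \prod_{i=2}^k (p_i - 1) \;\geq\; \prod_{i=2}^k p_{i-1} \;=\; p_1 p_2 \cdots p_{k-1},$$
where the first inequality uses $p_1 - 1 \geq 1$ (as $p_1 \geq 2$) and the second applies the observation term by term. This proves the reduced inequality, hence the lemma.

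There is essentially no serious obstacle here; the only points requiring a little care are the bookkeeping of exponents when cancelling the prime-power part of $n$ against that of $\varphi(n)$, and the trivial edge case $k = 1$, where the right-hand side of the reduced inequality is an empty product equal to $1$ and the statement just reads $p_1 - 1 \geq 1$.
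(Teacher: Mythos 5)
Your proof is correct. The paper itself gives no argument for this lemma --- it simply quotes it as Lemma C of the Amiri--Jafarian Amiri--Isaacs paper --- and your reduction to $\prod_{i=1}^k (p_i-1) \geq p_1\cdots p_{k-1}$ followed by the term-by-term bound $p_{i+1}-1 \geq p_i$ is exactly the standard elementary proof, with the $k=1$ edge case handled properly.
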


\begin{lemma} \label{nphi}
Let $s \geq r$ be two real numbers. Let $m \neq 1$ be a positive divisor of $n$. Then $\frac{n^s}{\varphi(n)^r} \geq \frac{m^s}{\varphi(m)^r}$, and equality holds if and only if one of the following occurs.
\begin{itemize}
\item $s > r$ and $m=n$.
\item $s = r$ and each prime divisor of $n$ divides $m$.
\end{itemize}
\end{lemma}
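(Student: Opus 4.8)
The plan is to factor, for every positive integer $k$,
\[
\frac{k^{s}}{\varphi(k)^{r}}=\left(\frac{k}{\varphi(k)}\right)^{s}\cdot\varphi(k)^{\,s-r},
\]
and to check that each of the two factors on the right is non-decreasing with respect to the divisibility order. For the first factor, recall $k/\varphi(k)=\prod_{p\mid k}\frac{p}{p-1}$ (product over primes); since $m\mid n$ the set of primes dividing $m$ lies inside the set of primes dividing $n$, and as every factor $\frac{p}{p-1}$ exceeds $1$ this gives $n/\varphi(n)\ge m/\varphi(m)\ge 1$, hence $(n/\varphi(n))^{s}\ge(m/\varphi(m))^{s}$ (here one uses $s\ge 0$, which holds in the range $s\ge 1$ relevant in this section). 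For the second factor, $m\mid n$ implies $\varphi(m)\mid\varphi(n)$---a standard fact, following from the single-prime step $\varphi(pk)\in\{p\varphi(k),(p-1)\varphi(k)\}$, so that $\varphi(k)\mid\varphi(pk)$ always---whence $1\le\varphi(m)\le\varphi(n)$, and since $s-r\ge 0$ we get $\varphi(n)^{\,s-r}\ge\varphi(m)^{\,s-r}$. Multiplying the two inequalities yields $\frac{n^{s}}{\varphi(n)^{r}}\ge\frac{m^{s}}{\varphi(m)^{r}}$.

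For the equality claim, I would note that equality above forces equality in each of the two factor comparisons. From $(n/\varphi(n))^{s}=(m/\varphi(m))^{s}$ with $s\neq 0$ one gets $\prod_{p\mid n}\frac{p}{p-1}=\prod_{p\mid m}\frac{p}{p-1}$, which---again because each factor exceeds $1$ and the primes of $m$ sit among those of $n$---is equivalent to $n$ and $m$ having the same set of prime divisors. From $\varphi(n)^{\,s-r}=\varphi(m)^{\,s-r}$ with $s\neq r$ one gets $\varphi(n)=\varphi(m)$. Now split into cases. If $s>r$, then $n$ and $m$ have the same prime support, say $n=\prod_i p_i^{a_i}$ and $m=\prod_i p_i^{b_i}$ with $a_i\ge b_i\ge 1$ since $m\mid n$, and $\varphi(n)=\varphi(m)$ reads $\prod_i p_i^{a_i-1}(p_i-1)=\prod_i p_i^{b_i-1}(p_i-1)$, which by unique factorization forces $a_i=b_i$ for all $i$, i.e.\ $n=m$. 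If $s=r$, the only surviving condition is that $n$ and $m$ have the same prime divisors, i.e.\ every prime divisor of $n$ divides $m$. The converses are immediate: when $s>r$, $m=n$ trivially gives equality; when $s=r$, equal prime support gives $n/\varphi(n)=m/\varphi(m)$ while $\varphi(n)^{\,s-r}=\varphi(m)^{\,s-r}=1$, so equality holds. This matches the statement.

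The real content is the two monotonicity facts, and the only delicate point is arranging the factorization so that \emph{both} exponents appearing---$s$ and $s-r$---act on quantities that are $\ge 1$ and are themselves $\ge 0$; this is precisely what the hypotheses $s\ge r$ and $s\ge 0$ buy us (the second being automatic here). The remaining work---chiefly the observation that equal prime support together with $\varphi(n)=\varphi(m)$ forces $n=m$---is routine, and I do not expect any genuine obstacle beyond keeping the equality cases straight.
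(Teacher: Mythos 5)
Your argument is correct, and it takes a genuinely different route from the paper's. The paper invokes multiplicativity of $k \mapsto k^s/\varphi(k)^r$ to reduce to $n=p^a$, $m=p^b$ with $1 \le b \le a$, where the inequality becomes $p^{(s-r)(a-b)} \ge 1$; you instead factor $k^s/\varphi(k)^r = (k/\varphi(k))^s\,\varphi(k)^{s-r}$ and check that each factor is non-decreasing along divisibility. Your decomposition has a concrete advantage: it makes visible that the hypothesis $s\ge 0$ is genuinely needed and not just convenient. The paper's reduction silently assumes $b\ge 1$, i.e.\ that every prime of $n$ already divides $m$; when some prime $p$ divides $n$ but not $m$ one must additionally verify $p^{as}/\varphi(p^a)^r \ge 1$, and this can fail for $s<0$. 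Indeed the lemma as literally stated (assuming only $s\ge r$) is false: for $s=-1$, $r=-2$, $n=6$, $m=3$ one gets $n^s/\varphi(n)^r = 2/3 < 4/3 = m^s/\varphi(m)^r$. So your parenthetical remark that you are using $s\ge 0$, available because the lemma is only applied with $s\ge 1$, is a necessary correction rather than a cosmetic one; in the paper's actual range ($s\ge 1$, $r\le s-1$) both proofs go through. Your equality analysis (equality in a product of two positive one-sided comparisons forces equality in each; equal prime support together with $\varphi(n)=\varphi(m)$ forces $n=m$) is complete and matches the stated equality cases, with the same harmless boundary caveat that $s=r=0$ makes everything identically $1$ and must be excluded.
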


\begin{proof}
Since the function $n \mapsto f(n) = \frac{n^s}{\varphi(n)^r}$ is multiplicative we may assume that $n = p^a$ and $m = p^b$ with $p$ a prime and $1 \leq b \leq a$. The inequality $f(n) \geq f(m)$ becomes $(p^s)^{a-b} \geq (p^r)^{a-b}$, i.e. $p^{(s-r)(a-b)} \geq 1$ which follows from $a \geq b$, $s \geq r$. If equality holds and $s > r$ we find $a=b$.
\end{proof}

\begin{lemma} \label{cosets}
Let $P$ be a cyclic normal Sylow $p$-subgroup of $G$. Let $x \in G$ and assume that the coset $Px$ has order $m$ as an element of $G/P$. Suppose $s > r$. Then $\psi(Px) \leq  \frac{m^s}{\varphi(m)^r} \psi(P)$ with equality if and only if $x$ centralizes $P$.
\end{lemma}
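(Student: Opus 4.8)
The plan is to pin down completely the orders of the elements of the coset $Px$. Write $|P| = p^a$; since $P$ is a Sylow $p$-subgroup of $G$, the index $|G/P|$ is prime to $p$, hence so is $m$. As $P$ is normal and cyclic, conjugation by $x$ acts on $P$ as a single power map $u \mapsto u^{\gamma}$, where $\gamma$ is a unit modulo $p^a$, and $x$ centralizes $P$ precisely when $\gamma \equiv 1 \pmod{p^a}$. A one-line induction gives $(yx)^{k} = y^{\,1+\gamma+\cdots+\gamma^{k-1}}\,x^{k}$ for $y \in P$; taking $k = m$ and using $x^{m} \in P$ (because $(Px)^{m}=P$ in $G/P$) yields $(yx)^{m} = y^{S}x^{m}$ with $S = 1+\gamma+\cdots+\gamma^{m-1}$, while the triviality of conjugation by $x^{m} \in P$ forces $\gamma^{m} \equiv 1 \pmod{p^a}$. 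I would then record the elementary fact that every $g \in Px$ has $o(g) = m\, p^{j}$ where $p^{j} = o(g^{m})$: indeed $m \mid o(g)$ since the image of $g$ in $G/P$ has order $m$, $o(g)\mid m p^{j}$ since $g^{mp^{j}}=(g^m)^{p^j}=1$, and coprimality of $m$ and $p$ separates the two parts. Since then $\frac{o(g)^{s}}{\varphi(o(g))^{r}} = \frac{m^{s}}{\varphi(m)^{r}}\cdot\frac{(p^{j})^{s}}{\varphi(p^{j})^{r}}$, the whole problem reduces to understanding the multiset of orders $\{\,o(g^{m}):g\in Px\,\}$.

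I would then distinguish two cases. If $x$ centralizes $P$ (i.e. $\gamma\equiv1$), then $S=m$ and $(yx)^{m}=y^{m}x^{m}$; as $\gcd(m,p)=1$ the map $y\mapsto y^{m}x^{m}$ is a bijection of $P$, so $g\mapsto g^{m}$ is a bijection $Px\to P$. Substituting into the identity above gives $\psi(Px)=\frac{m^{s}}{\varphi(m)^{r}}\sum_{z\in P}\frac{o(z)^{s}}{\varphi(o(z))^{r}}=\frac{m^{s}}{\varphi(m)^{r}}\psi(P)$, i.e. equality.

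If $x$ does not centralize $P$, then the multiplicative order $d$ of $\gamma$ modulo $p^a$ satisfies $1<d\mid m$, so $d$ is prime to $p$; since the reduction map $(\mathbb{Z}/p^a)^{\times}\to(\mathbb{Z}/p)^{\times}$ has a $p$-group as kernel, it is injective on $\langle\gamma\rangle$, hence $\gamma$ has order $d>1$ already modulo $p$. In particular $p$ is odd and $p\nmid\gamma-1$. This is the decisive point: now $\gamma-1$ is a unit modulo $p^a$ while $\gamma^{m}-1\equiv0\pmod{p^a}$, so $p^a\mid S=(\gamma^{m}-1)/(\gamma-1)$ and therefore $y^{S}=1$ for every $y\in P$; moreover $x$ commutes with $x^{m}$, so $(x^{m})^{\gamma}=x^{m}$, giving $o(x^{m})\mid\gamma-1$, and since $o(x^{m})$ is a power of $p$ we get $x^{m}=1$. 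Hence $(yx)^{m}=1$ for all $y\in P$, and combined with $m\mid o(yx)$ every element of $Px$ has order exactly $m$. Thus $\psi(Px)=|P|\cdot\frac{m^{s}}{\varphi(m)^{r}}=p^{a}\frac{m^{s}}{\varphi(m)^{r}}$, whereas $\psi(P)=\sum_{i=0}^{a}\varphi(p^{i})\frac{(p^{i})^{s}}{\varphi(p^{i})^{r}}>\sum_{i=0}^{a}\varphi(p^{i})=p^{a}$, the strict inequality coming from Lemma~\ref{nphi} together with the standing assumptions $s\geq1$, $r\leq s-1$ (each factor $(p^{i})^{s}/\varphi(p^{i})^{r}$ is $\geq1$, and the $i=a$ factor is $>1$). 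Hence $\psi(Px)<\frac{m^{s}}{\varphi(m)^{r}}\psi(P)$, strictly. Putting the two cases together proves the inequality and shows equality holds exactly when $x$ centralizes $P$.

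The main obstacle is the non-centralizing case, and within it the observation that for a cyclic \emph{Sylow} $p$-subgroup the conjugation exponent $\gamma$ has order prime to $p$, which forces $\gamma\not\equiv1\pmod p$: this single fact both collapses $(yx)^{m}$ to $x^{m}$ and then forces $x^{m}=1$, after which the comparison with $\psi(P)$ is routine. A minor point to keep track of is exactly which hypotheses guarantee $\psi(P)>|P|$; the ambient assumptions of this case, $s\geq1$ and $r\leq s-1$, are what is used there.
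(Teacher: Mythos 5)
Your proof is correct, but it takes a genuinely different route from the paper's. The paper first replaces $x$ by a power lying in the same coset and having order exactly $m$, and then argues pointwise: for each $u\in P$ the set $\langle u\rangle\langle x\rangle$ is a subgroup of order $m\,o(u)$ (since $\langle u\rangle$ is characteristic in $P$, hence normal in $G$), so $o(ux)$ divides $m\,o(u)$, and Lemma~\ref{nphi} converts this divisibility into the term-by-term bound $\frac{o(ux)^s}{\varphi(o(ux))^r}\le\frac{m^s}{\varphi(m)^r}\frac{o(u)^s}{\varphi(o(u))^r}$, with equality exactly when $x$ centralizes $u$; summing over $u$ gives the lemma. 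You instead determine the order multiset of $Px$ exactly via the conjugation exponent $\gamma$. Your key observation --- that for a cyclic \emph{Sylow} subgroup a nontrivial $\gamma$ must already be nontrivial mod $p$, whence $p^a\mid S$ and $x^m=1$, so that in the non-centralizing case every element of $Px$ has order exactly $m$ --- is a genuine structural fact that the paper never makes explicit, and it delivers the equality case essentially for free. The paper's argument is shorter and stays closer to \cite{isaacs}; yours gives more information about the coset.

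One point deserves attention. In the non-centralizing case you need $\psi(P)>|P|$, and you correctly flag that this uses $s\ge1$ and $r\le s-1$ rather than the bare hypothesis $s>r$ under which the lemma is stated. This is not a defect of your argument: the lemma as stated is in fact false for general $s>r$. Take $G=S_3$, $P=A_3$, $x$ a transposition, so $m=2$ and $\psi(Px)=3\cdot2^s$ while $\frac{m^s}{\varphi(m)^r}\psi(P)=2^s\bigl(1+2\cdot3^s/2^r\bigr)$; the claimed inequality reduces to $3^s\ge2^r$, which fails for instance at $(r,s)=(-11,-10)$ even though $s>r$. The paper's own proof has the same hidden dependence, because it invokes Lemma~\ref{nphi} for a divisor ($o(ux)$ of $m\,o(u)$) that may omit the prime $p$ entirely, a case in which Lemma~\ref{nphi} itself needs $\frac{p^{as}}{\varphi(p^a)^r}\ge1$ and hence more than $s\ge r$. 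Since Lemma~\ref{cosets} is only ever applied in the regime $s\ge1$, $r\le s-1$, your proof covers exactly what is needed; you are right to record the stronger hypothesis explicitly, and the statement of the lemma ought to carry it as well.
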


\begin{proof}
The case $m=1$ is clear, so now assume $m > 1$. Since $m$ divides $o(x)$ we can write $o(x) = mq$ for some integer $q$. Then $q = o(x^m)$ and as $x^m \in P$, we see that $q$ is a power of $p$. But $m$ divides $|G/P|$, which is not divisible by $p$, so $q$ and $m$ are coprime, and there exists an integer $n$ such that $qn \equiv 1 \mod m$. Now $o(x^q) = m$ and we write $y=(x^q)^n$ so that $o(y) = m$ because $n$ is coprime to $m$. Also $Py = Px^{qn} = (Px)^{qn} = Px$ so since $P$ is abelian, $y$ centralizes $P$ if and only if $x$ centralizes $P$. We can thus replace $x$ by $y$ and assume that $o(x) = m$.

\ 

Every element of $Px$ has the form $ux$ for some element $u \in P$, and we argue that $\frac{o(ux)^s}{\varphi(o(ux))^r} \leq \frac{m^s}{\varphi(m)^r} \frac{o(u)^s}{\varphi(o(u))^r}$ with equality if and only if $x$ centralizes $u$. Since $P$ is cyclic, $\langle u \rangle$ is characteristic in $P$, and thus $\langle u \rangle \unlhd G$ and $\langle u \rangle \langle x \rangle$ is a subgroup. Now $ux$ is an element of $\langle u \rangle \langle x \rangle$, so $o(ux)$ divides $|\langle u \rangle \langle x \rangle| = m o(u)$, and if equality holds then $\langle u \rangle \langle x \rangle$ is cyclic hence $x$ centralizes $u$, and conversely if $x$ centralizes $u$ then since $o(x)$ and $o(u)$ are coprime, $o(ux) = o(u)o(x) = mo(u)$. Since $s > r$ it follows from Lemma \ref{nphi} that $\frac{o(ux)^s}{\varphi(o(ux))^r} \leq \frac{m^s o(u)^s}{\varphi(m o(u))^r} = \frac{m^s}{\varphi(m)^r} \frac{o(u)^s}{\varphi(o(u))^r}$ with equality if and only if $x$ centralizes $u$. Now
\begin{eqnarray}
\psi(Px) & = & \sum_{u \in P} \frac{o(ux)^s}{\varphi(o(ux))^r} \leq \sum_{u \in P} \frac{m^s}{\varphi(m)^r} \frac{o(u)^s}{\varphi(o(u))^r} \nonumber \\ & = & \frac{m^s}{\varphi(m)^r} \sum_{u \in P} \frac{o(u)^s}{\varphi(o(u))^r} = \frac{m^s}{\varphi(m)^r} \psi(P). \nonumber
\end{eqnarray}
Moreover equality holds if and only if $x$ centralizes $u$ for every $u \in P$, i.e. $x$ centralizes $P$.
\end{proof}

\begin{cor} \label{psigp}
Let $P$ be a cyclic normal Sylow $p$-subgroup of $G$. Then $\psi(G) \leq \psi(P) \psi(G/P)$ with equality if and only if $P$ is central in $G$.
\end{cor}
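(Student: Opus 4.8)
The plan is to combine the coset partition of $G$ with Lemma \ref{cosets}, applied termwise. Since $P \unlhd G$, fix a transversal $X$ of $P$ in $G$, so that $G = \bigcup_{x \in X} Px$ is a disjoint union and hence $\psi(G) = \sum_{x \in X} \psi(Px)$. For each $x \in X$ let $m$ be the order of the coset $Px$ in $G/P$; we are in the subsection where $s \geq 1$ and $r \leq s-1$, so $s > r$ and Lemma \ref{cosets} applies, giving $\psi(Px) \leq \frac{m^s}{\varphi(m)^r}\psi(P)$ with equality if and only if $x$ centralizes $P$. Summing over $X$ and pulling out the constant factor $\psi(P)$,
$$\psi(G) = \sum_{x \in X} \psi(Px) \leq \psi(P) \sum_{x \in X} \frac{o(Px)^s}{\varphi(o(Px))^r} = \psi(P)\,\psi(G/P),$$
the last step being the definition of $\psi$ on the quotient group, since as $x$ ranges over $X$ the coset $Px$ ranges exactly once over all elements of $G/P$. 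This establishes the inequality.

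For the equality clause, note that $\psi(P) > 0$ and each $\frac{m^s}{\varphi(m)^r} > 0$, so the chain above is an equality if and only if $\psi(Px) = \frac{o(Px)^s}{\varphi(o(Px))^r}\psi(P)$ for every $x \in X$ (a sum of nonnegative terms vanishes iff each term does). By the equality statement of Lemma \ref{cosets} this happens iff every $x \in X$ centralizes $P$. Because $P$ is cyclic, hence abelian, we have $P \leq C_G(P)$, so the property "$x$ centralizes $P$" depends only on the coset $Px$; thus "every $x \in X$ centralizes $P$" is equivalent to $C_G(P) = G$, that is, $P \leq Z(G)$. Conversely, if $P$ is central then every $x$ centralizes $P$, each inequality above is an equality, and $\psi(G) = \psi(P)\psi(G/P)$.

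I do not expect a genuine obstacle here: the corollary is essentially a bookkeeping consequence of Lemma \ref{cosets}. The only two points that deserve an explicit (one-line) remark are that the sum $\sum_{x \in X}\psi(Px)$ does not depend on the choice of transversal — immediate, since the cosets themselves do not — and that the equality condition of Lemma \ref{cosets} is a coset-invariant condition on $x$, which is exactly what lets us read it off as a statement about $C_G(P)$ rather than about the chosen representatives.
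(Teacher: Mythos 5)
Your proof is correct and follows the same route as the paper: decompose $G$ into the cosets of $P$, apply Lemma \ref{cosets} to each coset, sum, and observe that equality forces every coset representative (hence, since $P$ is abelian, every element of $G$) to centralize $P$. Your extra remarks on transversal-independence and coset-invariance of the centralizing condition are harmless elaborations of what the paper leaves implicit.
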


\begin{proof}
Write $o(Px)$ to denote the order of a coset $Px$ viewed as an element of $G/P$. Applying Lemma \ref{cosets} to each coset of $P$ in $G$ we have
\begin{eqnarray}
\psi(G) & = & \sum_{Px \in G/P} \psi(Px) \leq \sum_{Px \in G/P} \frac{o(Px)^s}{\varphi(o(Px))^r} \psi(P) \nonumber \\ & = & \psi(P) \sum_{Px \in G/P} \frac{o(Px)^s}{\varphi(o(Px))^r} = \psi(P) \psi(G/P). \nonumber
\end{eqnarray}
By Lemma \ref{cosets} equality holds if and only if every element $x \in G$ centralizes $P$, i.e. $P$ is central in $G$.
\end{proof}

We show that if $T_G(r,s) \geq 0$ then $G$ is cyclic, and we do it by induction on $|G|$. This is a straightforward generalization of the argument used in \cite{isaacs} (proof of the main theorem). Assume $T_G(r,s) \geq 0$, i.e. $R_G(r,s) \geq R_{C_n}(r,s)$, which we can write as $\psi(G) \geq \psi(C_n)$ where $n = |G|$. Then averaging on $G$ and using the fact that $C_n$ has $\varphi(n)$ elements of order $n$ and Lemma \ref{largep}, where $p$ is the largest prime divisor of $n$, we find
\begin{eqnarray}
\frac{1}{|G|} \sum_{x \in G} \frac{o(x)^s}{\varphi(o(x))^r} & \geq & \frac{1}{n} \sum_{x \in C_n} \frac{o(x)^s}{\varphi(o(x))^r} > \frac{1}{n} \frac{n^s}{\varphi(n)^r} \varphi(n) \nonumber \\ & \geq & \frac{1}{n} \frac{n^s}{\varphi(n)^r} \frac{n}{p} \geq \frac{n^s}{\varphi(n)^r} \frac{1}{p}. \nonumber
\end{eqnarray}
The strict inequality comes from the fact that we did not count the contribution of the identity element of $C_n$. This implies that there exists at least one element $x \in G$ ``not below the average'', i.e. such that $$\frac{o(x)^s}{\varphi(o(x))^r} > \frac{n^s}{\varphi(n)^r} \frac{1}{p}.$$ Let $m = o(x)$. Since $s \geq 1$ we have $(n/m)^{s-1} \geq (\varphi(n)/\varphi(m))^{s-1}$ by Lemma \ref{nphi}, and since $s-1 \geq r$ we have $(\varphi(n)/\varphi(m))^{s-1} \geq (\varphi(n)/\varphi(m))^r$. We deduce that $$|G:\langle x \rangle| = \frac{n}{m} \leq \frac{n}{m} \cdot \frac{(n/m)^{s-1}}{(\varphi(n)/\varphi(m))^r} = \frac{(n/m)^s}{(\varphi(n)/\varphi(m))^r} < p.$$Hence $p$ does not divide $|G:\langle x \rangle|$, in other words $\langle x \rangle$ contains a Sylow $p$-subgroup $P$ of $G$. Moreover $\langle x \rangle \subseteq N_G(P)$ hence $|G:N_G(P)| < p$ and it follows from the Sylow theorem that $|G:N_G(P)|=1$, i.e. $P \unlhd G$. Corollary \ref{psigp} then implies that $\psi(G) \leq \psi(P) \psi(G/P)$ with equality if and only if $P$ is central in $G$. Let $Q$ be the Sylow $p$-subgroup of $C = C_n$, so that $P \cong Q$ and $\psi(P) = \psi(Q)$. We have $$\psi(P) \psi(G/P) \geq \psi(G) \geq \psi(C) = \psi(Q) \psi(C/Q) = \psi(P) \psi(C/Q)$$hence $\psi(G/P) \geq \psi(C/Q)$. By the induction hypothesis we deduce that $G/P$ is cyclic. Then $G/P \cong C/Q$ and hence $\psi(G/P) = \psi(C/Q)$. Thus equality holds in the above chain of inequalities and thus $P$ is central in $G$.

Since $P$ is central and $G/P$ is cyclic, it follows that $G$ is abelian, and as $P$ is a Sylow subgroup of $G$, we know that we can write $G = P \times B$ where $B \cong G/P$ is cyclic. Thus $G$ is a direct product of cyclic groups of coprime orders, and thus $G$ is cyclic, as required.

\subsection{Nilpotent case} \label{case5}

\ 

We prove that if $G$ is nilpotent and non-cyclic then the sign of $T_{G}(r,s)$ equals the sign of $r-s$.

\ 

By Lemma \ref{nov} applied to $A(n) = c_n \varphi(n)$, $B(n) = n f(n)$, we have $R_G(r,s) = \sum_{k|n} g^{r,s}_{k,n/k} kf(k)$, hence $$T_G(r,s) = \sum_{k|n} g^{r,s}_{k,n/k} k(f(k)-1).$$ Suppose first that $|G| = p^d$ with $p$ a prime. We then have
\begin{align*}
T_G(r,s) & = \sum_{i=0}^d g^{r,s}_{p^i,p^{d-i}} p^i(f(p^i)-1) = \sum_{i=1}^{d-1} g^{r,s}_{p^i,p^{d-i}} p^i(f(p^i)-1),
\end{align*}
where we used that $f(1) = 1 = f(p^d)$. Now, for $1 \leq i \leq d$ we have $g^{r,s}_{p^i,p^{d-i}} = \frac{p^{is}}{\varphi(p^i)^r}(1-p^{s-r})$. Therefore $g^{r,s}_{p^i,p^{d-i}} < 0$ if and only if $s > r$, $g^{r,s}_{p^i,p^{d-i}} > 0$ if and only if $s < r$, and $g^{r,s}_{p^i,p^{d-i}} = 0$ if and only if $s = r$. Since $G$ is non-cyclic, $f(p^i) \neq 1$ for some $1 \leq i \leq d$ hence $T_G(r,s) < 0$ if $s > r$, $T_G(r,s) = 0$ if $r = s$ and $T_G(r,s) > 0$ if $s < r$.

Now assume that $G$ is any nilpotent non-cyclic group, and write $G = P_1 \times \cdots \times P_t$ as direct product of its Sylow subgroups. Note that $R_G(r,s)$ is multiplicative in the sense that if $A,B$ are groups of coprime orders then $R_{A \times B}(r,s) = R_A(r,s) R_B(r,s)$. Write $|P_i| = p_i^{a_i}$ and $n = |G| = p_1^{a_1} \cdots p_t^{a_t}$. If $r < s$ then, since $G$ is non-cyclic, by our above discussion of $p$-groups $R_{P_i}(r,s) < R_{C_{p_i^{a_i}}}(r,s)$ for some $i \in \{1,\ldots,t\}$ hence taking the product we find $R_G(r,s) < R_{C_n}(r,s)$. Similarly if $r > s$ then $R_G(r,s) > R_{C_n}(r,s)$ and if $r=s$ then $R_G(r,s) = R_{C_n}(r,s)$.

\section{Some observations}

Consider now the case $(r,s) = (1,0)$. In this case the result looks as follows. Let $d(n)$ denote the number of positive divisors of the integer $n$.

\begin{cor} \label{cormotiv}
Let $n$ be a divisor of $|G|$. Then $G$ has at least $d(n)$ cyclic subgroups of order a divisor of $n$. Moreover the following are equivalent.
\begin{enumerate}
\item For all divisors $m$ of $n$, $G$ has exactly $m$ elements $x$ such that $x^m=1$.
\item $G$ has exactly $d(n)$ cyclic subgroups of order a divisor of $n$.
\item The subgroup generated by the cyclic subgroups of $G$ of order a divisor of $n$ is cyclic and has order $n$.
\end{enumerate}
\end{cor}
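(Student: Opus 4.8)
The plan is to read Corollary~\ref{cormotiv} off Theorem~\ref{mainth}(1) specialized to the pair $(r,s)=(1,0)$, which is admissible since $0\le 0$ and $0<1$, and then to translate the three conditions into one another by elementary counting. The first step is to evaluate the two relevant quantities. Grouping the elements of $G$ of order dividing $n$ by their order and using that there are exactly $\varphi(m)c_m$ elements of order $m$, we get $R_{G,n}(1,0)=\sum_{m|n}c_m$, the number of cyclic subgroups of $G$ of order a divisor of $n$; similarly, since $n$ divides $|G|$, the group $C_{|G|}$ has exactly $\varphi(m)$ elements of order $m$ for each $m\mid n$, so $R_{C_{|G|},n}(1,0)=\sum_{m|n}1=d(n)$. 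Hence $T_{G,n}(1,0)$ equals the number of cyclic subgroups of $G$ of order a divisor of $n$ minus $d(n)$, and Theorem~\ref{mainth}(1) immediately gives that this is $\ge 0$ --- the stated lower bound --- with equality if and only if $G$ has a unique cyclic subgroup of order $m$ for every divisor $m$ of $n$, i.e.\ $c_m=1$ for all $m\mid n$.

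Next I would verify that conditions (1) and (2) are each equivalent to ``$c_m=1$ for all $m\mid n$''. Condition (2) is exactly $\sum_{m|n}c_m=d(n)$, i.e.\ $T_{G,n}(1,0)=0$, so by the previous paragraph (2) holds if and only if $c_m=1$ for all $m\mid n$. For (1), use that $G$ has exactly $\sum_{k|m}c_k\varphi(k)$ elements $x$ with $x^m=1$: if $c_k=1$ for all $k\mid n$ then this count equals $\sum_{k|m}\varphi(k)=m$ for every $m\mid n$, which is (1); conversely, if it equals $m$ for every $m\mid n$, then $\sum_{k|m}(c_k-1)\varphi(k)=0$ for all $m\mid n$, and an induction on $m$ (equivalently, M\"obius inversion) forces $c_m=1$ for all $m\mid n$.

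Finally I would fold in condition (3); this is the only part not already packaged inside Theorem~\ref{mainth}(1). Let $H$ be the subgroup generated by the cyclic subgroups of $G$ of order a divisor of $n$, and note that $x\in H$ whenever $o(x)\mid n$, because then $\langle x\rangle$ is one of the generating subgroups. For the implication ``$c_m=1$ for all $m\mid n$'' $\Rightarrow$ (3): let $C$ be the unique cyclic subgroup of order $n$; any cyclic subgroup $D\le G$ with $|D|=m\mid n$ must coincide with the unique subgroup of $C$ of order $m$, since the latter is itself a cyclic subgroup of $G$ of order $m$ and $c_m=1$, so $D\le C$, whence $H=C$ is cyclic of order $n$. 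For (3) $\Rightarrow$ (1): if $H$ is cyclic of order $n$, then for $m\mid n$ every $x\in G$ with $x^m=1$ satisfies $o(x)\mid n$, so $\langle x\rangle\le H$ and the set of such $x$ is $\{y\in H:y^m=1\}$, which in the cyclic group $H$ of order $n$ has exactly $\gcd(m,n)=m$ elements; this is (1). Together these give $(1)\Leftrightarrow(2)\Leftrightarrow(3)$. There is no genuine obstacle beyond bookkeeping: the one thing to watch is moving cleanly between the three descriptions --- the number of solutions of $x^m=1$, the number of cyclic subgroups, and the subgroup they generate --- since all the analytic work has already been carried out in Theorem~\ref{mainth}(1).
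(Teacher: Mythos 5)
Your proposal is correct and follows essentially the same route as the paper: identify $R_{G,n}(1,0)$ with the number of cyclic subgroups of order dividing $n$ (and $R_{C_{|G|},n}(1,0)$ with $d(n)$), then invoke Theorem \ref{mainth}(1) at $(r,s)=(1,0)$. You merely spell out the equivalences among (1), (2), (3) that the paper leaves to the reader with ``the result follows,'' and those verifications are all accurate.
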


\begin{proof}
We prove that the number of cyclic subgroups of a finite group $G$ whose order divides $n$ equals $\sum_{x \in G, o(x)|n} \frac{1}{\varphi(o(x))}$. Let $\langle x_1 \rangle, \ldots, \langle x_k \rangle$ be the distinct cyclic subgroups of $G$ of order a divisor of $n$. Then $\langle x_i \rangle$ contains $\varphi(o(x_i))$ elements of order $o(x_i)$. For $x, y \in G$ write $x \sim y$ if $x,y$ generate the same cyclic subgroup of $G$. Then $\sim$ is an equivalence relation in $\{x \in G\ :\ o(x)|n\}$ hence $$\sum_{x \in G,o(x)|n} \frac{1}{\varphi(o(x))} =  \sum_{i=1}^k \sum_{x \sim x_i} \frac{1}{\varphi(o(x))} = \sum_{i=1}^k \frac{\varphi(o(x_i))}{\varphi(o(x_i))} = k.$$Now the result follows from Theorem \ref{mainth} choosing $(r,s) = (1,0)$.
\end{proof}

\includegraphics{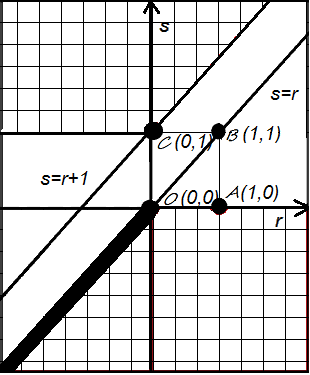}

De Medts and Tarnauceanu in \cite{medts2} prove that if $G$ is nilpotent then $T_G(1,1)=0$, and they conjecture that the converse holds, namely if $T_G(1,1)=0$ then $G$ is nilpotent. Also, they conjecture in \cite{medts} that $T_G(1,1) \leq 0$ for all finite groups $G$.

\begin{question}
Does the equation $T_G(r,s)=0$ detect solvability of $G$ for some $(r,s)$? (This is a version of a question of Thompson, cf. \cite{medts2}).
\end{question}

\begin{question}
What structural properties of $G$ can be detected by the equation $T_G(r,s)=0$ (for fixed $r,s$)?
\end{question}

\begin{question}
Is the set of pairs $(r,s)$ for which cyclic groups are detected by the equation $T_G(r,s)=0$ dense in $\mathbb{R}^2$?
\end{question}

In the picture above the point $O = (0,0)$ does not detect anything, $A = (1,0)$ (which corresponds to the number of cyclic subgroups of $G$) and $C = (0,1)$ (which corresponds to the sum of the orders of the elements of $G$) detect cyclicity and whether $B = (1,1)$ detects nilpotency is the question of De Medts and Tarnauceanu. The checked zone detects cyclicity and the thick line detects nilpotency.

\section{Proof of Theorem \ref{thprod}} \label{sprod}

Let $G$ be a finite group of order $n$. For a divisor $d$ of $n$ let $A(d)$ be the number of elements of $G$ of order $d$. Then $P_G := \prod_{x \in G} o(x) = \prod_{d|n} d^{A(d)}$ hence $\log(P_G) = \sum_{d|n} A(d) \log(d)$. Let $B(m)$ be the size of $\{x \in G\ :\ x^m=1\} \subseteq G$ for $m|n$. Then $\sum_{d|m} A(d) = B(m)$. We compute

\begin{eqnarray}
\sum_{m|n} \log(m) A(m) & = & \sum_{m|n} \log(m) \sum_{k|m} B(k) \mu(m/k) = \sum_{k|m|n} \log(m) B(k) \mu(m/k) \nonumber \\ & = & \sum_{k|n, i|n/k} \log(ik) \mu(i) B(k) = \sum_{k|n} \left( \sum_{i|n/k} \mu(i) \log(ik) \right) B(k) \nonumber \\ & = & \sum_{k|n} g_{k,n/k} B(k) \nonumber
\end{eqnarray}
where $g_{m,j} = \sum_{i|j} \mu(i) \log(mi)$. Clearly $g_{m,1} = \log(m)$ and if $j > 1$ then $g_{m,j} = \sum_{i|j} \mu(i) \log(i)$. We now compute $g_{m,j}$ for $j > 1$. If $j$ is a power of a prime $p$ then $g_{m,j} = -\log(p)$. Now assume this is not the case. Let $P$ be the set of prime divisors of $j$, so that $|P| > 1$. We prove that $g_{m,j} = 0$.
\begin{eqnarray}
g_{m,j} & = & \sum_{i|j} \mu(i) \log(i) = \sum_{I \subseteq P} (-1)^{|I|} \sum_{p \in I} \log(p) \nonumber \\ & = & \sum_{p \in P} \sum_{p \in I \subseteq P} (-1)^{|I|} \log(p) = - \sum_{p \in P} \log(p) \sum_{I \subseteq P-\{p\}} (-1)^{|I|} = 0. \nonumber
\end{eqnarray}
We used that $P-\{p\} \neq \emptyset$.

In conclusion, writing $n = p_1^{c_1} \cdots p_t^{c_t}$ and $B_i := \sum_{j=1}^{c_i} B(n/p_i^j)$ we have that $\log(P_G)$ equals
\begin{align*}
\sum_{m|n} \log(m) A(m) = & \sum_{k|n} g_{k,n/k} B(k) = \log(n) B(n) - \sum_{i=1}^t \log(p_i) \sum_{j=1}^{c_i} B(n/p_i^{j}) \\ = & \log \left( \frac{n^{B(n)}}{p_1^{\sum_{j=1}^{c_1} B(n/p_1^{j})} \cdots p_t^{\sum_{j=1}^{c_t} B(n/p_t^{j})}} \right) = \log \left( \frac{n^{B(n)}}{p_1^{B_1} \cdots p_t^{B_t}} \right).
\end{align*}
Since $B(n)=n$ we obtain that $$P_G = \frac{n^{n}}{p_1^{B_1} \cdots p_t^{B_t}}.$$
Since $B(m) \geq m$ for all $m|n$ (by Frobenius Theorem), we obtain $$P_G = \frac{n^{n}}{p_1^{B_1} \cdots p_t^{B_t}} \leq \frac{n^{n}}{p_1^{\sum_{j=1}^{c_1} n/p_1^{j}} \cdots p_t^{\sum_{j=1}^{c_t} n/p_t^{j}}} = P_{C_n}.$$Suppose now that equality holds. Then $B(n/p_i^{j}) = n/p_i^{j}$ for $i = 1,\ldots,t$ and $j = 1,\ldots,c_i$. In particular if $p|n$ then $B(n/p) = n/p$ hence there are $n-n/p > 1$ elements of $G$ of order not dividing $n/p$, in particular the Sylow $p$-subgroups of $G$ are cyclic. In particular if $G$ is a $p$-group we are done, so now assume that $n$ is divisible by at least two distinct primes. Let $P$ be a Sylow $p$-subgroup of $G$. We prove that $P$ is central in $G$. To do this it is enough to show that for all prime $q$ dividing $n$ there is a Sylow $q$-subgroup $Q$ of $G$ which centralizes $P$. If $p=q$ choose $Q=P$. Now suppose $q \neq p$. Write $c_p$, $c_q$ for the positive integers such that $p^{c_p}$, $q^{c_q}$ are the largest powers of $p,q$ dividing $n$, respectively. It is enough to prove that $G$ contains elements of order $p^{c_p} q^{c_q}$. There are $n/p$ elements of order dividing $n/p$ and $n/q$ elements of order dividing $n/q$, hence there are at least $n-n/p-n/q$ elements of order not dividing $n/p$ and not dividing $n/q$, in other words there are at least $n-n/p-n/q$ elements of order divisible by $p^{c_p} q^{c_q}$, a suitable power of which has order $p^{c_p} q^{c_q}$. We only need to make sure that $n-n/p-n/q > 0$. We have $n-n/p-n/q \geq n-n/2-n/3 = n/6 > 0$.

Since the Sylow subgroups of $G$ are central and cyclic, $G$ is cyclic. The proof is completed.

\section{An example} \label{examples}

Let $\gamma$ be a fixed positive integer. In this section we construct infinitely many finite groups $G$ with exactly $d(|G|) + \gamma$ cyclic subgroups. This is done by applying Proposition \ref{subcont} with $m=3$, $\beta = 5^{\gamma-1}$.

\ 

Let $\alpha, \beta, m, u$ be positive integers with $\alpha = 2^u \beta$ (since $u \geq 1$, $\alpha$ is even), $\beta$ odd and $(\alpha,m) = 1$. Let $G := C_m \rtimes C_{\alpha}$ where the action of $C_{\alpha} = \langle x \rangle$ on $C_m = \langle a \rangle$ is given by inversion: $a^x = x^{-1}ax = a^{-1}$.

\begin{prop} \label{subcont}
The number of cyclic subgroups of $G$ is $d(|G|) + d(\beta) (m-d(m))$.
\end{prop}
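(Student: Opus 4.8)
The plan is to count cyclic subgroups of $G = C_m \rtimes C_\alpha$ directly, organizing them according to how they meet the normal subgroup $C_m = \langle a \rangle$ and the structure of the action by inversion. First I would record the element structure: every element of $G$ is uniquely $a^i x^j$ with $0 \le i < m$, $0 \le j < \alpha$; since $a^x = a^{-1}$, an element $a^i x^j$ with $j$ even commutes appropriately so that $(a^i x^j)^k = a^{i(1 + (-1)^j + \cdots)} x^{jk}$, while for $j$ odd the exponent of $a$ telescopes. The key elementary computation is the order of $a^i x^j$: when $j$ is even this should be $\mathrm{lcm}(o(a^i), o(x^j))$ up to the coprimality $(\alpha,m)=1$, and when $j$ is odd one gets a controlled answer because $x^j$ inverts $a^i$, so $(a^i x^j)^2 = x^{2j}$ has order $o(x^j)/\gcd(2, \cdot)$ and the element $a^i x^j$ itself has order equal to $2 \cdot o(x^{2j})$-ish — this is where the oddness of $\beta$ and the $2$-part $2^u$ of $\alpha$ enter. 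I would isolate this order formula as a small lemma or a paragraph of case analysis before counting subgroups.

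Next I would split the cyclic subgroups $C \le G$ into two families: (a) those contained in the index-$2$ subgroup $H := C_m \rtimes C_{\alpha/2^u \cdot 2^{u-1}}$... more cleanly, those $C$ on which $C_\alpha$ acts "without inversion", i.e. $C$ contained in $C_m \times \langle x^2 \rangle$-type subgroups, versus (b) cyclic subgroups generated by an element $a^i x^j$ with $j$ odd. Family (a): the subgroup $C_m \times C_{\alpha}$ would be abelian were the action trivial — but it is not; instead I would observe that the elements of $G$ of order coprime to $2$, together with a suitable even cyclic part, generate cyclic subgroups, and more precisely that $\langle a \rangle$ is central-modulo... Actually the cleanest route: elements $a^i x^j$ generate a cyclic group containing $a$-part only when $x^j$ centralizes $a^i$, i.e. $j$ even or $a^i = a^{-i}$. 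I would count, for each divisor $e \mid \alpha$ and each divisor $f \mid m$, the cyclic subgroups of order $\mathrm{lcm}(e,f) = ef$ of "abelian type", recovering $\sum$ over divisors to get the $d(|G|)$ main term, since the subgroup $\langle a \rangle \times \langle x^2 \rangle$ is genuinely abelian of order $m\alpha/2$ hence contributes $d(m\alpha/2)$ cyclic subgroups, and then the subgroups meeting the "full" $2$-part need separate attention.

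The subtler family is (b): cyclic subgroups $\langle a^i x^j \rangle$ with $j$ having full $2$-part (so that $x^j$ still inverts $a$). Here using the order formula from step one, I would show that the cyclic subgroup generated by $a^i x^j$ depends on $i$ in a way that produces genuinely new subgroups not counted in family (a) — and counting the distinct ones gives the correction term $d(\beta)(m - d(m))$. The heuristic: for each odd divisor pattern corresponding to a divisor of $\beta$ (the odd part of $\alpha$) there are $m$ choices of $a^i$, but the $d(m)$ of these that land in already-counted abelian subgroups (roughly, the $i$ for which $a^i$ is inverted trivially, i.e. $2i \equiv 0$, together with the interplay $\langle a^i \rangle$ ranging over the $d(m)$ cyclic subgroups of $C_m$) must be subtracted, leaving $m - d(m)$ new ones per divisor of $\beta$. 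I would verify the count is exact by checking no double-counting occurs between distinct $(i,j)$ via the order formula and by checking family (a) and family (b) are disjoint (an element $a^i x^j$ with $j$ of full $2$-part has even order with a specific $2$-part, distinguishing it).

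The main obstacle I anticipate is the bookkeeping in family (b): pinning down exactly when $\langle a^i x^j \rangle = \langle a^{i'} x^{j'} \rangle$, and confirming that the $i$-orbits under the relevant symmetry (inversion $i \mapsto -i$, plus the action of taking powers of the generator) partition $\{0,\dots,m-1\}$ into precisely $\tfrac{m - d(m)}{?} + d(m)$-compatible classes so that the total over all $d(\beta)$ "odd levels" is exactly $d(\beta)(m - d(m))$. This requires care because inversion fixes $i = 0$ (and $i = m/2$ if $m$ even, but $m$ is odd here since $(\alpha, m) = 1$ and $\alpha$ even, so only $i=0$ is fixed), so the orbit sizes are not uniform; I would handle it by a clean bijection between the "new" cyclic subgroups and pairs (divisor of $\beta$, cyclic subgroup of $C_m$ paired with a non-central coset representative), rather than a raw orbit count.
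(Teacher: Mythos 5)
Your overall strategy---split the cyclic subgroups according to the parity of the power of $x$ in a generator, compute $o(a^ix^j)$ via $(a^ix^j)^2=x^{2j}$ for $j$ odd, and use that $\langle a\rangle\langle x^2\rangle$ is cyclic (of order $m\alpha/2$, since $(m,\alpha)=1$; ``abelian'' is not enough here)---is exactly the right one and is what the paper does. But the accounting that is supposed to produce the formula is wrong, in two places. First, your family (a) does not contribute $d(|G|)$ cyclic subgroups. All elements $a^ix^{2t}$ lie in the single cyclic subgroup $\langle ax^2\rangle$ of order $m\alpha/2$, so family (a) contributes exactly $d(m\alpha/2)=u\,d(m)\,d(\beta)$ subgroups, which is strictly less than $d(|G|)=(u+1)\,d(m)\,d(\beta)$. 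There is no family of $d(|G|)$ ``abelian-type'' cyclic subgroups indexed by divisor pairs $(e,f)$ with $e\mid\alpha$, $f\mid m$: $G$ has no element of order $f\cdot 2^u$ with $1<f\mid m$, because any element whose order is divisible by $2^u$ must be of the form $a^ix^j$ with $j$ odd, and such an element has order $\alpha/(j,\beta)$, which is coprime to $m$. So most of the $d(\alpha)d(m)$ pairs you want to index over are simply not realized.

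Second, your correction term is obtained by subtracting, for each divisor of $\beta$, the $d(m)$ subgroups that ``land in already-counted abelian subgroups''; but none of them do. For $j$ odd, $\langle a^ix^j\rangle$ contains the element $a^ix^j$ with $x$-part an odd power of $x$, hence is not contained in $\langle ax^2\rangle$. Moreover the subgroup $\langle a^ix^j\rangle$ determines $i$ exactly (every element of it outside $\langle x\rangle$ has $a$-part precisely $a^i$, since $(a^ix^j)^n$ equals $a^ix^{nj}$ for $n$ odd and $x^{nj}$ for $n$ even), so your worry about quotienting by the inversion $i\mapsto -i$ is unfounded: distinct pairs $(i,w)$ with $w=(j,\beta)\mid\beta$ give distinct subgroups, and family (b) contributes exactly $m\,d(\beta)$ genuinely new subgroups with nothing to subtract. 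The stated formula then comes out of the purely arithmetic identity $m\,d(\beta)+u\,d(m)\,d(\beta)=d(|G|)+d(\beta)\bigl(m-d(m)\bigr)$, using $d(|G|)=(u+1)d(m)d(\beta)$; it is a rearrangement of the two counts, not a reflection of containments between the two families. As written, your bookkeeping would yield a different (incorrect) total, so this step needs to be replaced, and you still need the final verification (present in the paper) that every cyclic subgroup generated by $a^ix^t$ with $t$ odd coincides with one of the $\langle a^ix^w\rangle$, $w=(\alpha,t)\mid\beta$.
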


For the record, $$T_G(r,s) = \frac{2^{us}}{\varphi(2^u)^{r-1}} (m-\sigma_{r,s}(m)) \sigma_{r,s}(\beta)$$ where $\sigma_{r,s}(x) := \sum_{i|x} \frac{i^s}{\varphi(i)^{r-1}}$.

\begin{proof}
We will first find the cyclic subgroups generated by elements of the form $a^r x^w$ where $w$ divides $\beta$ and then we will look at the elements of the form $a^r x^{2t}$.

\ 

Let $r \in \{0,\ldots,m-1\}$ and let $w$ be a divisor of $\beta$. We compute $o(a^r x^w)$. Since $\beta$ is odd, $w$ is odd, hence $a^{x^w} = a^{-1}$ hence $(a^r x^w)^2 = x^{2w}$. This implies that $(a^r x^w)^n$ equals $x^{nw}$ if $n$ is even and $a^r x^{nw}$ if $n$ is odd, implying that $o(a^r x^w) = o(x^w) = \alpha/w$, which is divisible by $2^u$. Moreover clearly $a^r$ is determined by $\langle a^r x^w \rangle$ (all the elements of $\langle a^r x^w \rangle$ outside $\langle x \rangle$ are of the form $a^r x^{nw}$) and $x^w$ is determined by $\langle a^r x^w \rangle$ because $o(a^r x^w) = \alpha/w$. This means that different pairs $(r,w)$ give rise to distinct cyclic subgroups $\langle a^r x^w \rangle$. Hence there are $m d(\beta)$ such subgroups. Hence we have found $m d(\beta)$ cyclic subgroups of the form $a^r x^w$ where $w$ is a divisor of $\beta$ (so $w$ is odd).

\ 

Since $a$ commutes with $x^2$ we have $(ax^2)^n = a^n x^{2n}$ hence $o(ax^2) = m \alpha/2$. Moreover the elements of $G$ of the form $a^r x^{2t}$ (i.e. $a^r$ times an even power of $x$) verify $(a^r x^{2t})^{m \alpha/2} = a^{rm} x^{\alpha} = 1$ hence their order divides $m \alpha/2$, on the other hand there are exactly $m \alpha/2$ such elements hence they all belong to $\langle a x^2 \rangle$. This proves that $\langle a x^2 \rangle$ is the only cyclic subgroup of $G$ of order $m \alpha/2$.

\ 

We have found $m d(\beta) + d(m \alpha/2)$ cyclic subgroups of $G$. We have $d(|G|) = d(m \beta 2^u) = (u+1) d(m) d(\beta)$ and $d(m \alpha/2) = d(m) d(\beta) u$, hence $$m d(\beta) + d(m \alpha/2) = m d(\beta) + d(m) d(\beta) u = d(|G|) + m d(\beta) - d(m) d(\beta).$$ Therefore all we are left to show is that the cyclic subgroups we listed are all the cyclic subgroups of $G$.

\ 

We need to show that if $C$ is a cyclic subgroup of $G$ of order divisible by $2^u$ then $C$ is generated by an element of the form $a^r x^w$ with $w$ a divisor of $\beta$. Say $C$ is generated by $a^r x^t$. Then $t$ is odd, indeed if $t$ is even then $a^r$ and $x^t$ commute hence $(a^r x^t)^{m \alpha/2} = 1$, contradicting the fact that $o(a^r x^t)$ is divisible by $2^u$. Now, similarly as above, $a^r x^t$ has order $o(x^t) = \alpha/w$ where $w = (\alpha,t)$ is an odd divisor of $\alpha$, that is, a divisor of $\beta$. Since $\langle x^t \rangle = \langle x^w \rangle$ there is some integer $s$ such that $x^{ts}=x^w$ and $s$ is coprime to $o(x^t)=o(a^rx^t)$. This implies that $\langle a^r x^t \rangle = \langle (a^r x^t)^s \rangle$ and $(a^r x^t)^s$ equals $a^r x^w$ if $s$ is odd, it equals $x^w$ if $s$ is even.
\end{proof}

\section{Acknowledgements}

We are really grateful to Andrea Lucchini and Federico Menegazzo for helpful discussions, comments and suggestions. We are also very grateful to the referee for his/her very careful reading of a previous version of the paper.

\bigskip

{\it Martino Garonzi, Dipartimento di Matematica Pura ed
Applicata, Via Trieste 63, 35121 Padova, Italy.

E-mail address: mgaronzi@math.unipd.it}

\medskip

{\it Massimiliano Patassini

E-mail address: frapmass@gmail.com}

\end{document}